\newtheorem{Thm}{Theorem}[section]
\newtheorem{Prop}[Thm]{Proposition}
\newtheorem{Lem}[Thm]{Lemma}
\newtheorem{Cor}[Thm]{Corollary}
\theoremstyle{definition}
\newtheorem{Def}[Thm]{Definition}
\newtheorem{Ex}[Thm]{Example}
\newtheorem{Rem}[Thm]{Remark}
\def\le{\leqslant}
\def\ge{\geqslant}
\def\C{{\mathbb C}}
\def\R{{\mathbb R}}
\def\Q{{\mathbb Q}}
\def\Z{{\mathbb Z}}
\let\parasymbol=\S
\def\secref#1{\parasymbol\ref{#1}}
\def\S{\varSigma}
\def\ssm{\smallsetminus}
\def\d{\mathrm d}
\edef\polishl{\l}
\def\l{\lambda}
\edef\polishL{\L}
\def\Poly{\operatorname{Poly}}
\def\e{\varepsilon}
\def\bigland{\textstyle\bigwedge\nolimits}
\def\G{\varGamma}
\def\f{\varphi}
\begin{document}

\keywords{Regular flat connections, root counting, Fuchsian systems, monodromy, quasiunipotence, differential fields, Picard--Vessiot extensions.}
\mathclass{Primary 34C08, 32S65, 34M03, 34M56; Secondary 34C07, 34M35, 34M99.}

\abbrevauthors{Gal Binyamini e.a.}
\abbrevtitle{Quasialgebraic functions}

\title{Quasialgebraic functions}

\author{G. Binyamini, D. Novikov, S. Yakovenko}

\address{Department of Mathematics,\\
Weizmann Institute of Science,\\
Rehovot 76100, Israel\\
email: \textup{\texttt{\{galbin,dnovikov,yakov\}@weizmann.ac.il}}
}

\maketitlebcp

\begin{abstract}
We introduce and discuss a new class of (multivalued analytic) transcendental functions which still share with algebraic functions the property that the number of their isolated zeros can be explicitly counted. On the other hand, this class is sufficiently rich to include all periods (integral of rational forms over algebraic cycles).
\end{abstract}

\section{Motivations}

The question about the number and location of (isolated) zeros for different classes of analytic functions appears in connection with numerous problems, both in geometry (enumeration problems) and analysis. For instance, the famous  Hilbert's Sixteenth problem (one of the two in the Hilbert list which still remains unsolved) on limit cycles of planar polynomial vector fields can be reformulated as a question on isolated zeros of the Poincar\'e displacement function (see below for more details).

The obvious class of functions for which the counting problem admits a universally known, simple and precise answer, is that of polynomials (in one complex variable). The Fundamental Theorem of Algebra states that the number of isolated complex roots of a polynomial of degree $d>0$ is always between $1$ and $d$, and is exactly equal to $d$ if the roots are counted with multiplicity.

An instant generalization of this example is given by the class of algebraic functions of one variable: the required bounds are provided by the B\'ezout theorem. Because of the ramification and multivaluedness, the counting problem requires certain precautions to state. In this particular case it is sufficient to triangulate the complement $\C P^1\ssm\S$ to the finite ramification locus $\S$, count the number of isolated zeros of each branch of the function in each simplex (triangle) of the triangulation, and add up the separate results. As with the polynomials, if all roots are counted properly, the exact answer for the total number of isolated zeros is given by the intersection theory in purely algebraic terms.

However, essentially broader classes  of such type are not so easily produced. For instance, as was argued in \cite{montr}, no nontrivial class admitting finite count of zeros, can exist as a  subclass of single-valued functions on $\C P^1\ssm\S$ with any finite or infinite locus $\S$. Indeed, if all excluded points are poles of finite order (and their number is finite), then such a function $f$ is necessarily rational. If, on the contrary, one of the points  $a\in\S$ is an essential singularity for $f$ (the absolute value $|f(t)|$ grows to infinity faster than polynomially as $t\to a$), then by the Picard theorem, $f$ assumes almost all values infinitely many times in any neighborhood of $a$.

Thus any nontrivial class of functions allowing for an explicit global counting of their zeros on $\C P^1$ should necessarily include multivalued functions. This necessarily brings about the question on the automorphy (monodromy) of these functions, that is, how different branches obtained by analytic continuation avoiding the locus $\S$ are related to each other.

The most natural assumption, which covers a huge number of possible applications, is the assumption that the linear span generated by all branches of these functions, is a well-defined finite-dimensional linear space near any point outside $\S$. In particular, this includes all algebraic functions whose branches are simply permuted by the monodromy transformations. By the classical Riemann-type arguments \cite{thebook}, these spaces can be identified with the spaces of solutions of linear ordinary differential equations (or systems of such equations) with single-valued  coefficients holomorphic on $\C P^1\ssm\S$.

If a singular point $a\in\S$ is an isolated essential singularity for the coefficients of the equation, then one can again argue that solutions of the equation will grow faster then polynomially (when approaching $a$ along suitable narrow sectors, to avoid the problems with multivalued continuation). Then by the general principles of the Nevanlinna theory, these solutions will have infinite numbers of isolated zeros accumulating to $a$. Thus without loss of generality, we may assume that all coefficients of the linear equations have only finite order poles, i.e., are rational functions of the independent variable $t$ with the polar locus $\S\subset\C P^1$. However, even this class should be further reduced to expect finite count for isolated roots, as explained in the next section.

\section{Counting roots of solutions of linear systems}

\subsection{Crash course on linear systems}
We recall a few basic facts on systems of linear ordinary differential equations; all these facts can be found in \cite[Chapter III]{thebook}.

A system of linear differential equations with rational coefficients on $\C P^1$ can be defined using a $(n\times n)$-matrix $\Omega=\{\Omega_{ij}\}$ of rational 1-forms on $\C P^1$, by the Pfaffian equations
\begin{equation}\label{pf1}
    \d X_i-\sum_{j=1}^n \Omega_{ij}X_j,\quad i=1,\dots, n,\qquad\text{or in the matrix form}\qquad \d X-\Omega X=0.
\end{equation}
In an affine chart $t\in\C\subset\C P^1$ the matrix 1-form $\Omega$ can be written as $A(t)\,\d t$ with a rational matrix function $A(t)=\{A_{ij}(t)\}$ so that the Pfaffian system \eqref{pf1} takes the form of a system of linear first order differential equations $\displaystyle\frac{\d X}{\d t}=A(t)X$. The (``fundamental'') solution of this system is a nondegenerate multivalued analytic matrix function $X=X(t)$, $\det X(t)\ne0$ for $t\notin\S$, ramified over the polar locus $\S$ of the coefficients; the point at infinity may or may not be singular.

The fundamental solution of the linear system \eqref{pf1} is defined uniquely modulo a constant right matrix factor $X(t)\mapsto X(t)C$, $C\in\operatorname{GL}(n,\C)$, hence the linear space spanned by all entries $X_{ij}(t)$ of any solution, is independent of the choice of this solution. Therefore for any simply connected subset $T\subseteq\C P^1\ssm\S$, the linear span is a well-defined $n^2$-dimensional subspace $\mathscr L_T$ in the space $\mathscr O(T)$ of holomorphic functions on $T$.

For a closed path (loop) $\gamma:[0,1]\to\C P^1\ssm\S$, $\gamma(0)=\gamma(1)$, the result of analytic continuation of a nondegenerate matrix solution $X(t)$ along this path is another solution $X'(t)=X(t)M$ where $M$ is a constant matrix factor.  This factor is called the \emph{monodromy matrix} associated with the loop $\gamma$. It is defined by the free homotopy classes of paths up to the conjugacy $M\mapsto C^{-1}MC$ independently of the solution $X(t)$. In particular, if $a_i\in\S$ and $\gamma$ is a simple loop contractible in $(\C P^1\ssm\S)\cup\{a_i\}$, then the corresponding monodromy is called (modulo the conjugacy) the \emph{monodromy of the small loop around} $a_i\in\S$.

\begin{Ex}[principal]\label{ex:euler}
Consider the Euler system with only two singularities, $t=0$ and $t=\infty$, both of which are simple poles of the coefficients. In the affine chart such a system takes the form $\displaystyle\frac{\d X}{\d t}=\frac At\cdot X$, where the constant \emph{residue matrix} $A$ can be assumed to have a Jordan normal form (i.e., diagonal). Solution of this system is a matrix function $t^A=\exp(A\ln t)$. In the case $A$ is diagonal, the space spanned by the components of the matrix $X$, is the space of linear combinations of the power functions $\sum_j c_j\,t^{\l_j}$, $j=1,\dots,n$, where $\l_1,\dots,\l_n$ are the eigenvalues of $A$ (in the general case one will have also the functions $t^{\l_j}\ln^k t$ for multiple eigenvalues).
\end{Ex}

For a general linear system with rational coefficients there is a well known classification of singular points in terms of the growth rate of the corresponding solutions. This classification is parallel to the classification of singularities of single-valued functions into poles and essential singularities: if solutions of a system grow at most polynomially near a point $a\in\S$, then this point is called a ``regular'' (moderate) singularity, otherwise the singularity is ``irregular'' (wild). The only caveat requires that the growth estimate for multivalued (ramified) functions be valid in sectors with the vertex at $a$ (and not in an arbitrary simply connected domain which may be spiraling around $a$). An elementary calculation shows that if $a$ is a \emph{simple} (first order) pole of the matrix form $\Omega$, then it is necessarily a moderate singularity, like the points $0,\infty$ for the Euler system. First order poles are called \emph{Fuchsian singularities}, and the Pfaffian system \eqref{pf1} is called \emph{Fuchsian}, if it has only Fuchsian singularities on $\C P^1$. In any affine chart a Fuchsian system takes the form
\begin{equation}\label{fs}
    \frac{\d X}{\d t}=\biggl(\sum_{j=1}^m \frac{A_j}{t-a_j}\biggr)X, \qquad a_j\in\S,\ A_j\in\operatorname{GL}(n,\C),
\end{equation}
where $A_1,\dots,A_m$ are the \emph{residue matrices} at the singular points $a_1,\dots,a_m$. The point $t=\infty$ is singular if and only if $A_1+\cdots+A_m\ne 0$ (in the singular case the residue at infinity is the negative of this sum).

In general the inverse statement is not true and a moderate singularity may well be non-Fuchsian. However, by a suitable \emph{local meromorphic gauge transformation} a moderate (regular) singularity can be brought into the Fuchsian form. This means that if $a\in\S$ is a moderate singular point, then there exists a neighborhood  $U\owns a$ in $\C P^1$ and a matrix function $H(t)$ holomorphic and invertible in $U\ssm a$ having (together with $H^{-1}(t)$) at most a pole at $a$, such that the matrix 1-form $\d H\cdot H^{-1}+H\Omega H^{-1}$ has a Fuchsian singularity (first order pole) at $a$. This form is the logarithmic derivative $\d Y\cdot Y^{-1}$ of the matrix function $Y(t)=H(t) X(t)$ (``linear change of the dependent variables''), provided that $\Omega=\d X\cdot X^{-1}$ is the logarithmic derivative of $X$.

The problem of \emph{global} classification of regular systems is closely related to the Hilbert 21st Problem. It was discovered only relatively recently (by A. Bolibruch in 1990) that in general not every regular system can be brought to the Fuchsian form by a \emph{rational} gauge transformation $H$ with singularities only at the polar set: there is a subtle obstruction for systems with a reducible monodromy group. However, this is always possible if one allows creation of a single extra singular point outside $\S$. Thus without loss of generality we may always assume that a regular system can be brought into the form \eqref{fs} by a rational gauge transformation.\label{p:gauge}

\begin{Rem}
Together with the analytic description given above, the regular systems can be described geometrically. Given a regular system, with any simply connected open domain $T_\alpha\subset \C P^1\ssm\S$ we can associate the $n$-dimensional linear subspace $\mathscr L_\alpha$ in the space of analytic functions $\mathscr O(T_\alpha)$, spanned by solutions of the system. Any non-empty simply connected pairwise intersection $T_{\alpha\beta}$ defines the canonical isomorphism between $\mathscr L_\alpha$ and $\mathscr L_\beta$, in other words, the spaces $\mathscr L_\alpha$ are naturally globally organized into the \emph{holomorphic vector bundle} over $\C P^1\ssm \S$. If the initial system is regular, then one can extend this bundle in an analytic way over the singular locus as explained in \cite[Proposition 18.8]{thebook}. Moreover, the procedure of analytic continuation induces on this bundle the additional structure of a connection in such a way that the functions from $\mathscr L_\alpha$ correspond to horizontal sections of this connection over $T_\alpha$. By construction, this connection is holomorphic and flat over $\C P^1\ssm\S$. At the singular points the connection has an ``atomic'' curvature defined by the local monodromy operators. It is this geometric structure which naturally appears in many applications, which makes regular systems so ubiquitous in mathematics.
\end{Rem}

\subsection{Root counting for Fuchsian systems}
The fact that the linear span of all entries of $X$ is well defined, allows us to formulate the \emph{root counting problem} for the system \eqref{pf1}. In what follows we use the symbol $\#A$ for the number of isolated points in the set $A$. Consider the supremum
\begin{equation}\label{count}
    \mathcal N(\Omega)=\sup_{\displaystyle T\cap\S=\varnothing}~~~\sup_{\displaystyle c_{ij}\in\C}\#\biggl\{p\in T:\sum_{i,j=1}^n c_{ij}X_{ij}(t)\text{ has an isolated root at }p\biggr\}.
\end{equation}
Here $T\subset\C P^1\ssm\S$ denotes an open triangle (an area bounded by 3 circular arcs or line segments) free from singularities of the differential equation \eqref{pf1} and supremum is taken over all such triangles free from singular points and all linear combinations $\sum c_{ij}X_{ij}(t)$. If this supremum is finite, then we say that the system \eqref{pf1} \emph{admits an upper bound for the number of isolated zeros of solutions}, or, simply, \emph{admits root counting}.

\begin{Ex}
A system with an irregular (wild) singular point $t=a$ in general does not admit root counting. In fact, by general arguments from the theory of value distribution, any solution of super-polynomial growth admits almost every value infinitely often, in any sufficiently large sector around the singular point.
\end{Ex}

For Fuchsian singularities the possibility of \emph{local} root counting (say, in a small circular neighborhood of the singular point) depends on the spectrum of the residue of the matrix 1-form $\Omega$.

\begin{Ex}[continuation of Example~\ref{ex:euler}]
It is well known that if all the eigenvalues $\l_j$ of the residue matrix $A=A_0=-A_\infty$ are real, then any such linear combination with real coefficients $c_j$ may have no more than $n-1$ positive real roots (the non-oscillation, or the Chebyshev property): one can prove this fact using the Rolle lemma on alternation between zeros of a real smooth function and its derivative. The complex roots of the linear combinations with complex coefficients can also be counted using a complex version of the Rolle lemma, as shown in \cite{jdcs-96}. In particular, under the same assumption $\l_1,\dots,\l_n\in\R$, the Euler system admits an upper bound for the number of roots by the expression $(n-1)+2\pi\cdot|\l_1-\l_n|$ (assuming the eigenvalues are labeled in a monotone order).

On the other end, if $n=2$ and $\l_1=-\l_2=\sqrt{-1}$ is a pair of complex conjugate eigenvalues, then the function $\frac12(t^{\mathrm i}+t^{-\mathrm i})=\cos\ln t$ has infinitely many positive isolated roots accumulating to both $t=0$ and $t=+\infty$. In this case $\mathcal N=+\infty$ and the system does not admit root counting.
\end{Ex}

In fact, the property that is essential for the root counting, is more naturally expressed in terms of the \emph{local monodromy}, i.e., the monodromy operators associated with the \emph{small loops} (around different singular points). For a Fuchsian singularity the eigenvalues $\nu_1,\dots,\nu_j$ of the monodromy of a small loop are exponents, $\nu_j=\exp 2\pi i \l_j$, of eigenvalues $\l_1,\dots,\l_n$ of the residue matrix \cite[Corollary 16.20]{thebook}. Thus all eigenvalues of the residue are real if and only if all eigenvalues of the local monodromy operators have unit modulus, $|\nu_j|=1$. (Note that this description fails for regular non-Fuchsian singularities).

\begin{Def}\label{def:qupotent}
We say that a regular system \eqref{pf1} has \emph{quasiunipotent monodromy} (or simply \emph{is quasiunipotent}), if the  monodromy operators $M_j$ of all small loops around distinct singular points $a_j\in\S$ are roots of unity (in particular, have only modulus 1 eigenvalues).
\end{Def}

Using the technique developed by A.~Gabrielov and A.~Khovanskii, one can prove the following (relatively easy) theorem, whose assumptions are algebraic and can be effectively verified.

\begin{Thm}\label{thm:exist}
A regular Pfaffian system \eqref{pf1} with quasiunipotent local monodromy admits finite root count, $\mathcal N(\Omega)<+\infty$.
\end{Thm}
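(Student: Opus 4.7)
The plan is to reduce the theorem, through two successive simplifications of $\Omega$, to a Pfaffian counting problem that is handled by the complex Rolle/Gabrielov--Khovanskii machinery (as in \cite{jdcs-96}). The first reduction is the rational gauge step on p.~\pageref{p:gauge}: I would assume $\Omega$ is Fuchsian, at the possible cost of a single extra apparent singular point, which does not affect the finiteness of $\mathcal N(\Omega)$. The second reduction trivializes the local monodromies. Let $N_j$ be the (finite) order of $M_j$; by Riemann's existence theorem there is a compact Riemann surface $Y$ and a map $\pi\colon Y\to\C P^1$ branched only over $\S$, with ramification index $N_j$ at every $b\in\pi^{-1}(a_j)$. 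The pulled-back Pfaffian system on $Y$ is again Fuchsian, and its local monodromy at each such $b$ is $M_j^{N_j}=I$. Any triangle $T\subset\C P^1\ssm\S$ lifts to at most $\deg\pi$ disjoint simply-connected triangles on $Y$, so a finite root-count bound upstairs yields one downstairs (multiplied by $\deg\pi$).

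On $Y$, with trivial local monodromy at each singular point, the classical local theory of regular singularities says that the entries of the fundamental matrix extend meromorphically across each $b\in\pi^{-1}(\S)$, with pole order explicitly bounded by the local exponents. Consequently, inside any simply-connected region $T'\subset Y$ a $\C$-linear combination $f$ of these entries is a meromorphic function whose pole divisor is bounded uniformly in the input data. To count the isolated zeros of $f$ in $T'$ I would organize the solution space into a complex Pfaffian chain built from the local principal parts $(z-b)^{-1}$, $b\in\pi^{-1}(\S)$, and apply the complex Rolle induction of \cite{jdcs-96}: the number of zeros of $f$ inside $T'$ is bounded by the zero count of a Wronskian-type auxiliary function of lower chain-complexity, plus a variation-of-argument boundary term controlled by the Pfaffian data. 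Iterating the Rolle step yields an explicit finite bound.

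The main obstacle is this last step: assembling the meromorphic solutions into a genuine Pfaffian chain whose length and polynomial degrees depend only on $n$, $|\S|$ and $\max_j N_j$, and verifying that the complex Rolle induction applies uniformly across all triangles $T'\subset Y$ and all linear combinations $f$. This is where the quasiunipotence hypothesis is essential: it is precisely this assumption that bounds the complexity of the Pfaffian chain and prevents the boundary variation of $\arg f$ from running away to infinity (as happens in the complex conjugate eigenvalue example). Everything else is routine bookkeeping through the two reductions above.
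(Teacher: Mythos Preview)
The paper does not actually prove this theorem; it only remarks that it follows from ``the technique developed by A.~Gabrielov and A.~Khovanskii'' and moves on. Your outline is in that spirit, but your second reduction rests on a reading of Definition~\ref{def:qupotent} that is almost certainly not the intended one, and this creates a real gap.

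You take ``the monodromy operators $M_j$ are roots of unity'' to mean that each $M_j$ has \emph{finite order}, so that on a suitable branched cover $Y$ the pulled-back local monodromy becomes trivial and the solutions extend meromorphically. But the standard meaning of \emph{quasiunipotent}---and the one required for the paper's main applications---is that the \emph{eigenvalues} of $M_j$ are roots of unity, i.e.\ some power $M_j^{N_j}$ is \emph{unipotent}, not the identity. The Picard--Lefschetz transvection around a nodal fiber, which is precisely what arises for the Abelian integrals of \secref{sec:IH16}, is unipotent of infinite order. On your cover $Y$ the local monodromy therefore becomes unipotent but in general not trivial, and the fundamental matrix does \emph{not} extend meromorphically across $b\in\pi^{-1}(\S)$: in a local coordinate $w$ its entries look like $\sum_k p_k(\log w)\,w^{k}$ with $p_k$ polynomials of degree $\le n-1$. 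Your pole-order bookkeeping and the ``meromorphic Pfaffian chain built from $(z-b)^{-1}$'' both collapse at this point.

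The repair is clear once the issue is named: adjoin the functions $\log(w-b)$, not only the poles $(w-b)^{-1}$, to your chain. Solutions on $Y$ are then polynomials of controlled degree in these Pfaffian seeds, and the Khovanskii/complex-Rolle induction of \cite{jdcs-96,fewnomials} applies. As written, however, the argument fails exactly in the cases the paper cares about, and the step you already flag as ``the main obstacle'' must absorb the logarithms coming from the Jordan blocks of the unipotent part.
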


Unfortunately, this approach, albeit very general and powerful, does not allow to make the bound for the counting function explicit, even for Fuchsian systems.

\subsection{Explicit bounds for Fuchsian systems}\label{sec:toward}
Several simple examples show that although each quasiunipotent Fuchsian system \eqref{fs} with the Pfaffian matrix $\Omega$ having only simple poles with real spectrum residues, admits a finite bound $\mathcal N(\Omega)$, this bound cannot be uniform on all systems of the given degree $m$ (the number of singular points) and dimension $n$.

\begin{Ex}\label{ex:growth}
The diagonal Euler $(2\times 2)$-system with two integer eigenvalues $0$ and $\ell\in\mathbb N$ admits the complex polynomial $t^\ell-1$ as a solution; this polynomial has as many as $\ell$ isolated roots in a suitable triangle $T\subseteq\C P^1$. This suggests that the bound $\mathcal N(\Omega)$ may well grow to infinity as the spectral radii of the residues grow to infinity.
\end{Ex}

A slightly less obvious obstruction to uniformity of the bound is the sensitivity of the quasiunipotence condition to the ``collision'' of singularities, when the distance $|a_i-a_j|$ between two distinct singular points $a_i,a_j\in\S$ tends to zero. It may well happen \cite{annalif-09} that the monodromy along each small loop is quasiunipotent, but the loop which encircles both singular points, has non-quasiunipotent monodromy. Thus in the limit when $a_i=a_j$, the system may have infinitely many roots of solutions, and an arbitrarily large finite number of them would persist for $0<|a_i-a_j|\ll 1$.

These phenomena can be re-stated as follows. The collection $\mathcal F_{n,m}$ of all Fuchsian systems \eqref{fs} of dimension $n$ with $m$ singular points is a semialgebraic variety, a subset of a complex affine space $\C^N$ which can be represented as the difference of two affine algebraic varieties.  For instance, consider all possible residue matrices $A_1,\dots,A_m\in\operatorname{Mat}(n,\C)\simeq\C^{n^2}$ and all possible pole assignments $a_1,\dots,a_m\in\C$, and exclude the ``degenerate cases'' when $A_i=0$ (disappearance of one of the singular points) or $a_i=a_j$ (collision of the poles). Then the counting function $\mathcal N(\cdot)$ becomes well defined on a proper semialgebraic subset  $\mathcal S_{n,m}\subsetneq\mathcal F_{n,m}$ defined by the condition $\operatorname{Spec}(A_j)\subseteq\R$, $j=1,\dots,m$. The counting function $\mathcal N$ is lower semicontinuous (all isolated complex roots of linear combinations survive sufficiently small perturbations if counted in open triangles with proper multiplicities)  and hence this function is bounded on any compact subset of $\mathcal S_{n,m}$. However, since $\mathcal S_{n,m}$ itself is non-compact, $\mathcal N(\cdot)$ can apriori be unbounded, and the above examples show that indeed $\mathcal N$ tends to infinity at least near some parts of the boundary\footnote{More precisely, we have to consider the projective boundary for which the closure should be taken with respect to the projective space which compactifies the affine space $\C^N$.} $\partial\mathcal S_{n,m}=\overline{\mathcal S_{n,m}}\ssm\mathcal S_{n,m}$.

The main result of the paper \cite{annalif-09} is the claim that the counting function $\mathcal N$ grows no faster than polynomially near the boundary $\partial \mathcal S_{n,m}$. This means that for any semialgebraic ``reciprocal distance to the boundary'' $\rho(\Omega)=1/\operatorname{dist}(\Omega,\partial \mathcal S_{n,m})$ one has an estimate of the form $\mathcal N(\Omega)\le \rho^k(\Omega)$, where $k<+\infty$ is a finite power which depends, of course, on the choice of $\rho$ (and also implicitly on $n$ and $m$). Since by the \polishL ojasiewicz inequality any two such distances $\rho,\rho'$ are equivalent, $\rho'\le \rho^{O(1)}$ and vice versa, the above polynomial growth statement is in fact independent of the choice of the distance $\rho$ as long as the latter remains semialgebraic.

However, the real strength of the bound achieved in \cite{annalif-09} is its constructive nature: it allows to give an explicit upper bound for $k$ in terms of $n,m$, assuming any given distance function. For instance, for a Pfaffian system \eqref{pf1} with residue matrices $A_j$ and the singularities $a_j$ (in a given affine chart) we can define
\begin{equation}\label{cover}
    \rho(\Omega)=2+\sum_j |A_j|+\sum_{i\ne j}\frac1{|a_i-a_j|}<+\infty.
\end{equation}
(In fact, this function does not ``notice'' the disappearance of the singular points when $A_j\to0$; the constant 2 is added in order to avoid special treatment of systems with $\rho\le 1$, as we are interested only in large values of $\rho$).

\begin{Thm}[\cite{annalif-09}]
\begin{equation}\label{double-fuchs}
 \mathcal N(\Omega)\le \rho(\Omega)^{\textstyle 2^{\scriptstyle\Poly(n,m)}},
\end{equation}
where $\Poly(n,m)$ is an explicit polynomial expression in $n,m$ of degree at most $1+\dim\mathcal S_{n,m}=1+mn^2+m$, where $\dim\mathcal S_{n,m}$ is the dimension of the semialgebraic set $\mathcal S_{n,m}$.
\end{Thm}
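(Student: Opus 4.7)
The plan is to bound $\mathcal{N}(\Omega)$ via the argument principle applied to a linear combination $f(t)=\sum c_{ij}X_{ij}(t)$ on the boundary of a triangle, combined with a Rolle-type tower built by successive derivation of the system \eqref{fs}. The double-exponential shape of the final bound reflects the iterated polynomial-in-$\rho$ cost of each level of the tower, iterated $\Poly(n,m)$ times.

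First I would establish a variational bound on a single triangle $T$ with $T\cap\S=\varnothing$: by the argument principle, the number of zeros of $f$ in $T$ equals $(2\pi)^{-1}$ times the variation of $\arg f$ along $\partial T$ (after slightly perturbing $T$ so that $f$ has no zeros on $\partial T$). The boundary $\partial T$ is split into arcs staying at a definite distance from $\S$ and arcs inside a small disk around some $a_j\in\S$. On arcs of the first kind I would use a Gronwall estimate: the growth of $|X(t)|$ along a path $\gamma$ is controlled by $\exp\bigl(\int_\gamma\|A(t)\|\,|dt|\bigr)$, which by \eqref{cover} is bounded by $\exp(\Poly(\rho))$, producing a $\Poly(\rho)$ variation of $\arg f$ on arcs of bounded length. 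On arcs near $a_j$, the quasiunipotence assumption forces the local residue $A_j$ to have real spectrum, and a complex Chebyshev/Rolle bound as in \cite{jdcs-96} gives a variation of $\arg f$ bounded by a function of the spread of $\operatorname{Spec}(A_j)$, which is again $\Poly(\rho)$. In combination this produces a polynomial-in-$\rho$ counting bound in a single pass.

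Second, to convert this single-pass estimate into the iterated bound $\rho^{2^{\Poly(n,m)}}$, I would run an inductive derivation procedure. Starting from $f$, the vector $(f,f',\dots,f^{(N-1)})$ with $N\le n^2$ satisfies the prolonged linear system obtained from \eqref{fs}; this prolonged system is regular, and after a meromorphic gauge transformation (cf.\ the discussion on p.~\pageref{p:gauge}) may be brought into Fuchsian form, possibly after introducing apparent singularities whose positions are algebraic in the data of \eqref{fs}. Applying the single-pass bound to the prolonged system controls the zeros of $f^{(N-1)}$, and a Rolle inequality bootstraps this into a zero count for $f$ at the cost of a further $\Poly(\rho)$ factor per level. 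Iterating $\Poly(n,m)$ times composes the polynomial factors into the required $\rho^{2^{\Poly(n,m)}}$.

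The main obstacle is the semialgebraic bookkeeping at each step: one must show that the new reciprocal distance $\rho'$ associated to the prolonged system is dominated by a polynomial in the original $\rho$, uniformly in a neighborhood of $\partial\mathcal{S}_{n,m}$. This rests on a \polishL ojasiewicz inequality for the semialgebraic set $\mathcal{S}_{n,m}$, whose exponent governs the polynomial $\Poly(n,m)$ in the double exponent and accounts for the dimension bound $1+\dim\mathcal{S}_{n,m}=1+mn^2+m$ in the statement.
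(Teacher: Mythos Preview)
The paper does not contain a proof of this theorem: it is stated with the citation \cite{annalif-09} and is quoted as ``the main result of the paper \cite{annalif-09}''. There is therefore no proof in the present paper to compare your proposal against.

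That said, your sketch is in the right ballpark for the argument actually carried out in \cite{annalif-09}: the variation-of-argument count on the boundary of a triangle, the separate treatment of arcs far from $\S$ versus arcs near a Fuchsian singularity, the use of the complex Rolle lemma from \cite{jdcs-96} near the singular points, and an inductive passage to a derived (prolonged) system are all genuine ingredients of that proof. A couple of points in your outline are imprecise. First, the Gronwall bound on $|X|$ along an arc is exponential in the integral of $\|A\|$, and this does not by itself give a polynomial-in-$\rho$ bound on the variation of argument of $f$; one needs instead to control the logarithmic derivative $f'/f$ directly, which in \cite{annalif-09} is done via an explicit ``Kim-type'' inequality relating $\operatorname{Var\,Arg}f$ to the coefficients of an associated scalar equation rather than through the growth of $|X|$. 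Second, the inductive step is not a simple prolongation $(f,f',\dots,f^{(N-1)})$ of a single combination: the actual construction passes from the system to a canonically associated ``derived'' system whose $\rho'$ is controlled by a polynomial in $\rho$, and the iteration depth (hence the double exponent) is governed by a semialgebraic \L ojasiewicz-type argument on $\mathcal S_{n,m}$, which is where the dimension $1+mn^2+m$ enters. Your identification of this last point as the crux is correct.
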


This double exponential bound may seem too excessive, yet one can argue following \cite{annalif-09} that a \emph{non-uniform} bound polynomially growing near the boundary of a semialgebraic set in a high-dimensional affine space and invariant by the choice of the semialgebraic distance, should necessarily be growing at a comparable rate as the dimension of the space grows to infinity.

\subsection{Towards uniform bounds: obstructions and solutions}
As was already mentioned in the previous section, one cannot expect to obtain a bound which would be uniform over all domain of finiteness of the counting function $\mathcal N$. Thus one has to focus on constructing some smaller ``naturally noncompact'' subsets of $\mathcal S_{n,m}$, for which uniform bounds can be achieved.

The weakest (also the most important) barrier occurs on the diagonal set $\bigcup_{i\ne j}\{a_i=a_j\}$. As was explained above, the obstruction to existence of a uniform bound is hidden in the fact that after collision of two singular points with quasiunipotent local monodromy one can obtain a point with non-quasiunipotent monodromy, to which infinitely many roots may accumulate. To prevent this, one could impose the quasiunipotence condition also on the monodromy for the loop encompassing both colliding singularities. However, this condition is \emph{non-algebraic} and in general non-verifiable: unlike the local computation with finite order jets which allows to compute the local monodromy operator \cite[Corollary 16.19]{thebook}, the monodromy along a large loop requires in general knowing exact solutions which are generically non-algebraic.

Instead one can impose an additional condition that all systems from the conjectural class are \emph{isomonodromic} to each other. This condition stipulates that for each closed loop avoiding the singular locus of a given system, all sufficiently close systems from this class have the same monodromy along this loop modulo conjugacy. In particular, this condition implies that all residue matrices remain in the same conjugacy class and hence their characteristic polynomials are constant.

The isomonodromy condition is surprisingly rigid: in general, it (locally) uniquely determines the residue matrices as functions of the polar configuration. More precisely, in the so called non-resonant case\footnote{The non-resonance assumption requires that the difference between any two eigenvalues of each residue matrix $A_j$ is non-integer.} the residues $A_i=A_i(a_1,\dots,a_n)$ of the Fuchsian system \eqref{fs} considered as functions of the pole location $a_1,\dots,a_n$, satisfy the integrable system of (nonlinear) matrix Pfaffian equations, called the \emph{Schlesinger system}:
\begin{equation*}
    \d A_i+\sum_{i\ne j}[A_i,A_j]\,\frac{\d (a_i-a_j)}{a_i-a_j}=0,\qquad i=1,\dots,n.
\end{equation*}
Solutions of this system may exhibit singularities both on the diagonal (``collision locus'' $\bigcup_{i\ne j}\{a_i-a_j=0\}$) and elsewhere (depending on the initial conditions), since the system is nonlinear (quadratic). Therefore in general the bound \eqref{double-fuchs} will diverge because of the ``explosion'' of the residues in the isomonodromic collision of Fuchsian singularities.

This phenomenon suggests that parametrization of isomonodromic families of Fuchsian systems by the location of their poles and the corresponding residues leads to creation of singularities on the diagonal (the collision locus). The alternative is to change the point of view and consider parametric classes of regular linear systems as (Pfaffian) systems on multidimensional phase space. Then the Schlesinger equations can be interpreted as the holonomy (flatness) condition equivalent to the local existence of solutions. This point of view is formalized as follows.

Consider a projective space $\C P^m$ and a rational $(n\times n)$-matrix Pfaffian form $\Omega=\{\Omega_{ij}\}_{i,j=1}^n$ on it. Denote by $\S\subseteq\C P^m$ the polar locus of $\Omega$, the union of polar loci of all scalar forms $\Omega_{ij}\in\bigland^1_\S(\C P^m)$. Outside of $\S$ we can consider the system formally defined by the same equations as \eqref{pf1}, yet this time with several ``independent variables'',
\begin{equation}\label{pfm}
    \d X=\Omega X,\qquad \Omega\in\operatorname{Mat}_{n\times n}\bigl(\bigland^1_\S(\C P^m)\bigr),
    \quad X\in\operatorname{GL}\bigl(n,\mathscr O(\C P^m\ssm\S)\bigr),
\end{equation}
which becomes a system of partial (rather than ordinary) differential equations in an affine chart on $\C P^m$. Obviously, we are interested only in systems which locally admit a holomorphic fundamental solution matrix $X$ so that $\Omega=\d X\cdot X^{-1}$. Applying the exterior derivative to both parts of this formula, we conclude that
\begin{equation}\label{flatness}
 \d \Omega=-\d X\land \d(X^{-1})=-\d X\land (-X^{-1}\,\d X\cdot X^{-1})=(\d X\cdot X^{-1})\land(\d X\cdot X^{-1})=\Omega\land\Omega.
\end{equation}
This \emph{local integrability condition} is also sufficient for the local existence of solutions; it has a geometric meaning that the connection defined by the form $\Omega$ on the trivial bundle over $\C P^m$ is \emph{flat}, i.e., has zero curvature.

In the same way as in one dimension, we define the \emph{regularity condition} for the system \eqref{pfm} on the singular locus, by requiring that any solution grows no faster than polynomially when approaching $\S$ inside any simply connected semialgebraic set $U\subseteq\C P^m\ssm\S$ (the semialgebraicity condition is required to exclude sets spiraling around $\S$):
\begin{equation}\label{regular}
    |X(z)|+|X^{-1}(z)|\le C\operatorname{dist}(z,\S)^{-N}\qquad\text{for some finite }N<+\infty,\quad z\in U.
\end{equation}

The root counting problem for solutions of the system \eqref{pfm} (defined on $\C P^m$) by definition reduces to that for the system \eqref{pf1} (defined on $\C P^1$) by considering the restriction on all possible projective lines not lying entirely in the singular locus: formally we can recycle the definition \eqref{count} by adding that the supremum is taken over all triangles $T$ belonging to all projective lines in $\C P^1$.

The next section contains the sufficient conditions for a regular integrable system \eqref{pfm} on $\C P^m$ to admit finite root count. We formulate a proper multidimensional generalization for the quasiunipotence condition on the local monodromy operators of a regular system \eqref{pf1}, which guarantees (together with the above conditions of integrability and regularity) the finiteness of roots in any triangle.

\subsection{The main result on quasiunipotent systems}
\begin{Def}
A \emph{small loop} around the polar divisor $\S$, centered at a point $a\in\S$, is the free homotopy class of the image of any sufficiently small circle $\{|t|=\e\}$ by a holomorphic map $\gamma:(\C,0)\to(\C P^m, \S)$ whose image does not belong to $\S$.
\end{Def}
If $a\in\S$ is a smooth point of $\S$, i.e., $\S$ is locally near $a$ represented by a single equation $\{f=0\}$ with $f(a)=0$, $\d f(a)\ne 0$, then all small loops centered at $a$ are free homotopy equivalent to each other. On the other hand, if $a$ is a singular point of the locus $\S$, this may not be the case anymore. However, certain spectral properties of the corresponding monodromy operators are inherited from generic to degenerate small loops.

\begin{Thm}[Kashiwara \cite{kashiwara}]
Assume a flat regular system \eqref{pfm} with the polar locus $\S$ possesses the following property: its monodromy operator along any small loop centered at a smooth point $\S$ is quasiunipotent.

Then the same quasiunipotence hold for \emph{any} small loop, including those centered at the singular points of $\S$ as well.
\end{Thm}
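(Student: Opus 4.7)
The plan is to reduce to the normal crossings case via resolution of singularities and then exploit flatness to factor the monodromy of any small loop as a product of commuting, individually quasiunipotent monodromies around smooth components of an NC divisor. First, by Hironaka's embedded resolution, choose a proper birational morphism $\pi\colon Y\to\C P^m$, obtained as a finite sequence of blow-ups along smooth centres, such that $\tilde\S:=\pi^{-1}(\S)=\bigcup_i D_i$ is a simple normal crossings divisor in $Y$ and $\pi$ is an isomorphism outside $\pi^{-1}(\S_{\mathrm{sing}})$. Pulling back the given flat regular connection by $\pi$ produces a flat connection on $Y\ssm\tilde\S$ that is regular along $\tilde\S$ (regularity is a local growth condition in sectors and is preserved under proper pullback).

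Second, analyse the local structure at a point $p\in\tilde\S$. Choose coordinates $(y_1,\dots,y_m)$ in a polydisc $U$ around $p$ so that $\tilde\S\cap U=\{y_1\cdots y_k=0\}$; then $\pi_1(U\ssm\tilde\S)\cong\Z^k$ is freely generated by small loops $\ell_1,\dots,\ell_k$ around the local components $D_{i_1},\dots,D_{i_k}$. Because the connection is flat, monodromy yields a genuine representation of $\pi_1$, so the operators $M_{i_1},\dots,M_{i_k}\in\operatorname{GL}(n,\C)$ commute pairwise. Now the small loop $\gamma\colon(\C,0)\to(\C P^m,\S)$ from the statement lifts (by properness of $\pi$, since its image is not contained in $\S$) to $\tilde\gamma\colon(\C,0)\to(Y,\tilde\S)$; writing $\tilde\gamma$ in the above coordinates at $p=\tilde\gamma(0)$, the image of $\{|t|=\e\}$ is freely homotopic to $\prod_j\ell_{i_j}^{n_j}$ for some non-negative integers $n_j$, so the monodromy of $\gamma$ equals $\prod_j M_{i_j}^{n_j}$. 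A product of commuting quasiunipotent operators is quasiunipotent, since after simultaneous triangularization the diagonal entries of the product are products of roots of unity, hence themselves roots of unity. It therefore remains only to check that each $M_i$ is quasiunipotent.

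Third, treat the irreducible components of $\tilde\S$ case by case. If $D_i$ is the strict transform of an irreducible component of $\S$, then a generic point of $D_i$ projects to a smooth point of $\S$ and $\pi$ conjugates the transversal monodromy at one to that at the other, which is quasiunipotent by hypothesis. If $D_i$ is exceptional, induct on the blow-up sequence. A blow-up $Y_\ell\to Y_{\ell-1}$ with smooth centre $Z$ contained in an intersection $D_{j_1}\cap\cdots\cap D_{j_s}$ of components of the NC divisor on $Y_{\ell-1}$ creates an exceptional divisor $E\subset Y_\ell$ whose transversal circle at a generic point of $E$ projects to a loop encircling each $D_{j_r}$ exactly once near a point of $Z$; consequently the monodromy around $E$ equals $M_{j_1}\cdots M_{j_s}$. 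These factors commute by the NC argument of the second paragraph (applied at $Z$ in $Y_{\ell-1}$) and are quasiunipotent by the inductive hypothesis, so their product is too, closing the induction.

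The main obstacle is this inductive step for exceptional divisors. One must verify, using the combinatorics of embedded resolution, that at every stage the centre of each blow-up really lies in a \emph{transverse} intersection of previously controlled components of the NC divisor, so that the resulting exceptional monodromy factors as a commuting product of quasiunipotent operators. Everything else is essentially formal, but this local computation on blow-ups of an NC divisor is the technical heart of Kashiwara's argument.
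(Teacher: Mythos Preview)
The paper does not supply a proof of this statement: it is cited from Kashiwara~\cite{kashiwara} and accompanied only by the remark that it ``is in fact a result on the local topology of the complements to embedded analytic hypersurfaces.'' So there is no argument in the paper to compare yours against line by line.

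Your overall plan---pass to a normal-crossings model by embedded resolution and then factor the monodromy of any small loop as a product of pairwise commuting local monodromies $M_i$---is the natural framework, and your first two steps are correct. The difficulty you isolate in the final paragraph is, however, a genuine gap rather than a bookkeeping chore. In Hironaka's algorithm the centre $Z$ of an intermediate blow-up lies in the singular locus of the \emph{strict transform} of $\S$, which at that stage is \emph{not yet} normal crossings; $Z$ is only required to be smooth and to meet the already-created exceptional divisors transversally. A transversal circle to the new exceptional divisor $E$, pushed down to $Y_{\ell-1}$, is therefore a small loop centred at a point where the strict transform is still singular: its monodromy is precisely the kind of ``bad'' small-loop monodromy you are trying to control, not a commuting product of operators already known to be quasiunipotent. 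The induction as written is circular. (Concretely: for the cusp $\{y^2=x^3\}\subset\C^2$ the first centre is the origin, a single point on one irreducible singular branch---not a transverse intersection of previously controlled components.)

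Kashiwara's own argument is organised quite differently. It is phrased in the language of constructible sheaves and proceeds by showing that the class of quasiunipotent objects is stable under the relevant sheaf-theoretic functors (proper direct image, nearby cycles), which sidesteps any explicit tracking of the blow-up combinatorics. If you want to repair a hands-on argument along your lines, you will need a different inductive scheme---for instance on the ambient dimension via generic hyperplane sections together with a Lefschetz-type surjection on local fundamental groups---but making that precise for arbitrary holomorphic arcs $\gamma$ requires additional work beyond what you have sketched.
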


This theorem is in fact a result on the local topology of the complements to embedded analytic hypersurfaces. Note that all small loops centered at smooth points of $\S$ are also free homotopic to each other in the case when $\S$ is an \emph{irreducible} algebraic subvariety in $\C P^m$. Thus the quasiunipotence condition is sufficient to verify at a finite number of points (one smooth point for each irreducible component of $\S$). Moreover, if $\Omega$ has a first order pole on $\S$ (i.e., $f\Omega$ is holomorphically extendable for any local equation of $\S$), then one can correctly define the residue as a holomorphic matrix function on the smooth part of $\S$, such that all values of the residue are conjugate to each other along any irreducible component of $\S$. For such systems the quasiunipotence condition is equivalent to the assumption that the residue matrix has only rational eigenvalues.

A system \eqref{pfm} meeting the assumptions of the Kashiwara theorem, will be called \emph{quasiunipotent}, naturally extending thus Definition~\ref{def:qupotent}. Not surprisingly, after the notions of regularity \eqref{regular}, quasiunipotence and the root counting function $\mathcal N$ are generalized from the one-dimensional to the multidimensional case, Theorem~\ref{thm:exist} remains valid.

\begin{Thm}\label{thm:existm}
A \emph{quasiunipotent} flat regular Pfaffian system \eqref{pfm} admits finite root count: $\mathcal N(\Omega)<+\infty$.
\end{Thm}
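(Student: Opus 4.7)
The plan is to reduce to the one-dimensional Theorem~\ref{thm:exist} by restriction to projective lines, to verify that restricted systems satisfy its hypotheses, and then to leverage the parametric nature of the underlying Gabrielov--Khovanskii technique to obtain a single bound uniformly across the family of lines.

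First I would restrict $\Omega$ to an arbitrary projective line $L\subset\C P^m$ not contained in $\S$. On a one-dimensional base the flatness condition $\d\Omega=\Omega\land\Omega$ is automatic, so the pullback $\Omega|_L$ defines an ordinary linear system of type \eqref{pf1} on $L\cong\C P^1$ with polar set the finite subset $L\cap\S$. By the very definition of $\mathcal N(\Omega)$ recalled after \eqref{regular}, one has $\mathcal N(\Omega)=\sup_L\mathcal N(\Omega|_L)$, the supremum ranging over all admissible lines.

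Next I would verify that regularity and quasiunipotence descend to each $\Omega|_L$. Any simply connected semialgebraic $U\subseteq L\ssm\S$ is also simply connected and semialgebraic as a subset of $\C P^m\ssm\S$, so the growth estimate \eqref{regular} restricts verbatim, giving regularity of the restricted system. A small loop in $L$ about a point $a\in L\cap\S$ is tautologically a small loop in the multidimensional sense centered at $a\in\S$, since a parametrization of $L$ factors through $\C P^m$; its monodromy operator coincides with that of the ambient system along the same loop, which is quasiunipotent by hypothesis combined with Kashiwara's theorem (the latter being precisely what is needed when $L$ happens to pass through a singular point of $\S$). Thus Theorem~\ref{thm:exist} applies to each $\Omega|_L$ and yields $\mathcal N(\Omega|_L)<+\infty$.

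The main obstacle is the final step, uniformity of the bound across the Grassmannian of lines. The explicit Fuchsian estimate from Section~\ref{sec:toward} does not suffice here, since the reciprocal distance $\rho(\Omega|_L)$ defined in \eqref{cover} blows up on the discriminantal locus of lines (those tangent to $\S$, passing through singular points of $\S$, or exhibiting a collision of two intersection points with $\S$). The resolution lies in the parametric character of the Gabrielov--Khovanskii argument underlying Theorem~\ref{thm:exist}: one realizes the entries of a fundamental solution matrix as a single integrable Pfaffian chain globally defined on $\C P^m\ssm\S$, of complexity determined by $\Omega$ alone, and Khovanskii's theorem then bounds the number of isolated zeros of any linear combination of these entries along any line by a quantity depending only on this chain, independent of the line. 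Quasiunipotence, valid globally by Kashiwara, is precisely what rules out the infinite accumulation of zeros near $\S$ that would otherwise destroy such a uniform bound.
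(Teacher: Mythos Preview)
Your proposal is in line with the paper, though it should be said that the paper itself gives no detailed proof here: immediately after the statement it simply remarks that ``the mere finiteness of the bound can be proved using very general arguments of Gabrielov--Khovanskii--Varchenko type,'' mirroring the equally brief justification offered for Theorem~\ref{thm:exist}. There is thus no substantive argument in the paper to compare against; your sketch is already more detailed than what the authors provide, and the reduction-to-lines structure you outline (with Kashiwara's theorem handling the passage through singular points of $\S$) is the natural way to unpack their one-line reference.

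One point in your final paragraph does need correction. You claim that the entries of a fundamental solution matrix can be realized as ``a single integrable Pfaffian chain,'' and then invoke Khovanskii's theorem. In Khovanskii's sense a Pfaffian chain is \emph{triangular}: each function satisfies a first-order equation whose right-hand side involves only the preceding functions in the chain. The entries $X_{ij}$ of a solution of a general linear system $\d X=\Omega X$ do not have this structure, since the derivative of each entry involves all entries of its column simultaneously. These are \emph{Noetherian} functions in the sense of Tougeron and Gabrielov, and it is Gabrielov's finiteness theory for Noetherian chains (together with its parametric versions) that delivers the uniform bound you need, not the fewnomial theory for Pfaffian chains. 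This is more than nomenclature: the two theories have different hypotheses, different proofs, and different effective content. Your conclusion about uniformity across the Grassmannian of lines is correct in spirit, but the citation to ``Khovanskii's theorem'' for Pfaffian chains would not literally justify it; the paper's own invocation of ``Gabrielov--Khovanskii--Varchenko'' is pointing at the broader Noetherian framework.
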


As its one-dimensional version, the mere finiteness of the bound can be proved using very general arguments of Gabrielov--Khovanskii--Varchenko type. To make this bound explicit, a completely new approach is required. Even to formulate such a result, one has to introduce an additional characteristic of linear systems \eqref{pfm} which would essentially restrict the ``magnitude of the coefficients'' to exclude the situation described in Example~\ref{ex:growth}. Such a characteristic must necessarily enter the expression of any explicit bound, and the discussion in \secref{sec:toward} suggests that the norm of the residue matrices is a wrong choice.

\begin{Def}
We say that a Pfaffian system \eqref{pfm} is \emph{defined over $\Q$}, if in some affine chart $\C^m=\{t_1,\dots,t_m\}$ on $\C P^m$ all rational 1-forms $\Omega_{ij}$ have coefficients from the field $\Q(t_1,\dots,t_m)$, i.e., ratios of polynomials with integer coefficients from $\mathbb Z[t_1,\dots,t_m]$:
$$
 \Omega=\{\Omega_{ij}\}_{i,j=1}^m,\qquad
 \Omega_{ij}=\sum_{k=1}^m \frac{P_{ijk}(t)}{Q_{ijk}(t)}\,\d t_k,\qquad P_{ijk},Q_{ijk}\in\mathbb Z[t]=\mathbb Z[t_1,\dots,t_m].
$$
The largest integer number which is used to write explicitly the polynomials $P_{ijk},Q_{ijk}$, is called the \emph{complexity} of a system defined over $\Q$.
\end{Def}

This definition clearly depends on the choice of the representation of $\Omega$ (e.g., it changes after the affine change of variables $t\mapsto 2t$), yet for our purposes it will always be used in the sense that at least one representation of the given complexity exists. Having an upper bound on the complexity implies that any \emph{numeric} characteristic of the system, that can be obtained by a finite number of algebraic manipulations, can be explicitly majorized in the sense of the absolute value. This allows to translate any explicit algebraic algorithm into an explicit bound for the complexity of its results. In particular, we obtain the following bound for the root counting problem.

\begin{Thm}[\cite{invmath-10}]\label{thm:main}
If a quasiunipotent flat regular Pfaffian system \eqref{pfm} is defined over $\Q$ and has complexity at most $s\in\mathbb N$, $s\ge 2$, then
\begin{equation*}
    \mathcal N(\Omega)\le s^{2^{\scriptstyle\mathrm{Poly}(d,n,m)}},\qquad \mathrm{Poly}(d,n,m)\le O\bigl((d^5m^5n^{20})\bigr).
\end{equation*}
Here $d=\max_{i,j,k}(\deg P_{ijk},\deg Q_{ijk})$ is the degree of the Pfaffian form, $m=\dim_\C \C P^m$ the dimension of the space \textup(number of the independent variables\textup) and $n=\dim\Omega$ the dimension of the system \textup(number of the dependent variables\textup). The polynomial of degree $30$ occurring in the exponent is explicit and can be computed.
\end{Thm}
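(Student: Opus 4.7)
The plan is to reduce the multidimensional counting problem to the one-dimensional Fuchsian estimate~\eqref{double-fuchs} by restricting $\Omega$ to generic projective lines, and then to replace the abstract semialgebraic ``distance to the quasiunipotent boundary'' $\rho$ appearing there by an \emph{explicit arithmetic} quantity built from the complexity $s$. By definition,
$$
   \mathcal N(\Omega)=\sup_{L\not\subset\S} \mathcal N(\Omega|_L),
$$
where $L$ ranges over projective lines in $\C P^m$. For a rationally parameterized line $L$ of controlled height, the restriction $\Omega|_L$ is a rational matrix 1-form on $L\simeq\C P^1$ of degree at most $O(d)$ and complexity polynomially bounded in $s,d,m$. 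A small loop on $L$ around a point $p\in L\cap\S$ is also a small loop in $\C P^m$ around $p$, and hence has quasiunipotent monodromy by Kashiwara's theorem applied to $\Omega$; consequently $\Omega|_L$ is itself a regular quasiunipotent system on $\C P^1$ to which~\eqref{double-fuchs} applies.

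For each such line the Fuchsian estimate yields $\mathcal N(\Omega|_L)\le \rho(\Omega|_L)^{2^{\Poly(n,O(dm))}}$, the restricted system having dimension $n$ and at most $O(dm)$ singular points (the intersection number of $L$ with $\S$). The heart of the proof is an effective arithmetic bound for $\rho(\Omega|_L)$. The two ingredients of~\eqref{cover}, the residue norms $|A_j|$ and the reciprocal distances $1/|a_i-a_j|$, are algebraic functions of the coefficients of $\Omega$ and of the parameters of $L$, lying in number fields of bounded degree over $\Q$. Using effective \polishL ojasiewicz-type estimates obtained by tracking Bezout degrees of the resultants that compute $L\cap\S$ and the corresponding residues, one produces
$$
   \rho(\Omega|_L)\le s^{\Poly(d,n,m)}
$$
uniformly over lines $L$ outside an algebraic ``bad'' set of controlled complexity. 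Since $\mathcal N$ is lower semicontinuous and good lines are dense, the supremum over all $L$ is attained on good lines, and the two bounds combine into $\mathcal N(\Omega)\le s^{2^{\Poly(d,n,m)}}$.

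The main obstacle is the uniform estimate on $\rho(\Omega|_L)$ with an \emph{explicit} \polishL ojasiewicz exponent valid over the entire $(m-1)$-parameter family of admissible lines in $\C P^m$. A blind appeal to quantifier elimination over $\R$ would introduce a second exponential in $s$, so the polynomial $\Poly(d,n,m)$ must be extracted by hands-on degree bookkeeping at each algebraic step: computing the singular locus $L\cap\S$ as the zero-set of resultants of degree $\Poly(d,m)$, solving the linear systems that yield the residues, and examining the characteristic-polynomial coefficients that encode quasiunipotence. Compounding these polynomial-in-$d,m$ contributions with the $\Poly(n,m)$ exponent already present in~\eqref{double-fuchs} (whose degree is $\sim\dim\mathcal S_{n,m}\sim mn^2$) produces the total polynomial of degree $O(d^5m^5n^{20})$ announced in the statement; the power $n^{20}$ in particular reflects the quadratic compounding of the Fuchsian exponent by itself during this reduction.
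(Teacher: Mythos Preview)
The paper does not contain a proof of this theorem; it is quoted from \cite{invmath-10}, where the argument occupies the bulk of that paper. So there is no ``paper's own proof'' here to compare against line by line. Nevertheless, your proposal has a genuine gap that the surrounding discussion in \secref{sec:toward}--2.4 of the present paper explicitly warns about.

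Your plan is to restrict to lines $L\subset\C P^m$, invoke Kashiwara to keep quasiunipotence, apply the one-dimensional bound \eqref{double-fuchs}, and then bound $\rho(\Omega|_L)$ uniformly in $L$ by arithmetic means. The last step fails. The quantity $\rho(\Omega|_L)$ in \eqref{cover} contains the terms $\sum_{i\ne j}|a_i(L)-a_j(L)|^{-1}$, where $a_i(L)$ are the points of $L\cap\S$. As $L$ varies over the Grassmannian of lines, two such points collide along a codimension-one family of lines (tangent lines to $\S$, lines through $\operatorname{Sing}\S$, etc.), and near that family $\rho(\Omega|_L)\to\infty$. No \polishL ojasiewicz-type inequality can give $\rho(\Omega|_L)\le s^{\Poly(d,n,m)}$ uniformly on a set of lines that is dense in the analytic topology: the inequality is simply false there. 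Your ``bad set'' is either the collision locus itself---in which case the complement is open and dense but $\rho$ is unbounded on it---or a full neighbourhood of it, in which case the complement is not dense and the semicontinuity step does not recover the remaining lines.

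This is precisely the obstruction singled out in \secref{sec:toward} and the paragraph on the Schlesinger system: the non-uniformity of \eqref{double-fuchs} under collision of singularities is the reason one \emph{cannot} prove Theorem~\ref{thm:main} by a naive line-by-line reduction to the Fuchsian case. The proof in \cite{invmath-10} has to exploit the integrability (flatness) of $\Omega$ on $\C P^m$ in an essential way, performing an induction on $m$ in which the limiting systems arising from collisions are themselves controlled via Kashiwara's theorem; the arithmetic condition ``defined over $\Q$ with complexity $\le s$'' is used to keep this inductive process effective. Your sketch bypasses exactly the mechanism that makes the theorem nontrivial.
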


This theorem implicitly introduces a class of special ``quasialgebraic'' multivalued functions, which generalizes the class of algebraic functions in the sense that they admit finite root count of any branch.

\begin{Def}\label{def:Q}
A \emph{Q-system} is a flat regular quasiunipotent Pfaffian system defined over $\Q$. A \emph{Q-function} is a multivalued analytic function which is a component (or linear combination of components) of a matrix solution of a Q-system.
\end{Def}

The assertion that a multivalued nondegenerate matrix function $X(t)$, $t\in\C P^m\ssm\S$, is a Q-function, consists therefore of four separate statements:
\begin{enumerate}
 \item $X$ undergoes a linear monodromy transformation $X\mapsto XM$, $M\in\textrm{GL}(n,\C)$, when continued along a loop $\gamma$ avoiding the ramification locus $\S$;
 \item This monodromy transformation has only roots of unity as the eigenvalues, if $\gamma$ is a small loop (by the Kashiwara theorem, this condition needs to be checked only for small loops around smooth part of $\S$, one loop for each irreducible component of $\S$ suffices);
 \item $X$ is regular, i.e., exhibits at most moderate growth on $\S$ (in particular, $X$ has genuine finite order poles along the components of $\S$ with the identical monodromy or satisfies the growth control estimate \eqref{regular});
 \item The logarithmic derivative $\Omega=\d X\cdot X^{-1}$, necessarily rational in $t$ under the above assumptions, is defined over $\Q$.
\end{enumerate}
The flatness condition is automatically satisfied.

\section{Q-systems, Q-functions and their properties}

\subsection{Elementary examples of Q-functions}
Polynomials $\C[t_1,\dots,t_m]$ of a given bounded degree $d$ are a trivial example of $Q$-functions. Indeed, the exterior derivative of each monomial $t^\nu=t_1^{\nu_1}\cdots t_m^{\nu_m}$ is a linear combination of monomial 1-forms of smaller degree with natural coefficients not exceeding $d$, which means that the corresponding logarithmic derivative in this affine chart is constant and defined over $\mathbb Z$. The growth condition is obviously satisfied and the monodromy is identical, hence trivially quasiunipotent. Of course, the double exponential root count provided by Theorem~\ref{thm:main}, is enormously excessive in this case.

\begin{Prop}\label{prop:alg-pf}
Any algebraic function defined over $\Q$ is a Q-function.
\end{Prop}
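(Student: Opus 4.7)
The plan is to realize $y$ as one entry of a Vandermonde-type fundamental solution of an explicit linear Pfaffian system of companion form. I may assume that $y$ is not rational (when $y\in\Q(t)$, a direct $2\times 2$ companion construction $X=\begin{pmatrix}1&0\\y&1\end{pmatrix}$, $\Omega=\begin{pmatrix}0&0\\\d y&0\end{pmatrix}$ already exhibits $y$ as a Q-function). Fix then an irreducible polynomial $P(t,y)\in\Z[t_1,\dots,t_m,y]$ of $y$-degree $d\ge 2$ annihilating $y$ (obtained by taking the minimal polynomial of $y$ over $\Q(t)$ and clearing denominators), and let $\S\subset\C P^m$ be the union of the zero locus of the leading coefficient of $P$ in $y$ with the vanishing locus of the discriminant $\operatorname{Disc}_y P\in\Z[t]$. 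Off $\S$ the $d$ branches $y_1(t),\dots,y_d(t)$ of $P(t,\cdot)=0$ are holomorphic and pairwise distinct, so the Vandermonde matrix $X(t)=\bigl(y_j(t)^{i-1}\bigr)_{i,j=1}^d$ has nonzero determinant $\prod_{i<j}(y_j-y_i)$. The goal is to show that this $X$ is the fundamental solution of a Q-system $\d X=\Omega X$ exhibiting $y=X_{21}$ as an entry.

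To write $\Omega$ down, note that since $P$ is irreducible it is coprime with $\partial_y P$ in $\Q(t)[y]$, and the extended Euclidean algorithm yields $B(t,y)\in\Q(t)[y]$ of $y$-degree $<d$ satisfying $B(t,y)\cdot\partial_y P(t,y)\equiv 1\pmod{P}$. Implicit differentiation of $P(t,y)=0$ then gives
\[
    \d y\equiv -B(t,y)\sum_{i=1}^m (\partial_{t_i}P)(t,y)\,\d t_i\pmod{P},
\]
which after full reduction modulo $P$ takes the form $\sum_{k=0}^{d-1}\beta_k(t)\,y^k$ with each $\beta_k$ a rational 1-form in $t$ with coefficients in $\Q$. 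Multiplying by $ry^{r-1}$ and iteratively reducing $y^j$ modulo $P$ for $j\ge d$ expresses each $\d(y^r)$, $0\le r<d$, as a $\Q(t)$-linear combination of $1,y,\dots,y^{d-1}$ with 1-form coefficients; collecting these relations produces an explicit $d\times d$ matrix $\Omega$ whose entries are rational 1-forms over $\Q$ with polar locus contained in $\S$, and by construction $\d X^{(j)}=\Omega X^{(j)}$ for every Vandermonde column $X^{(j)}=(1,y_j,\dots,y_j^{d-1})^\top$.

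Next I verify the four items of Definition~\ref{def:Q}. (i) The linear monodromy property is immediate: analytic continuation along any loop $\gamma\subset\C P^m\ssm\S$ only permutes the branches $y_j$, hence acts on $X$ by right multiplication by a permutation matrix $M\in\operatorname{GL}(d,\Z)$. (ii) Quasiunipotence follows at once, since every permutation matrix has roots-of-unity eigenvalues; this actually holds for \emph{every} loop, not only the small ones whose testing is required by the Kashiwara theorem. (iii) For regularity, Cauchy's bound for the roots of $P(t,\cdot)$ yields $|y_j(t)|\le C\operatorname{dist}(t,\S)^{-N}$ on every simply connected semialgebraic $U\subseteq\C P^m\ssm\S$, because the coefficients of $P$ are polynomial in $t$ and its leading coefficient vanishes only on $\S$; Cramer's rule applied to $X^{-1}$, combined with the \polishL ojasiewicz inequality bounding $|\operatorname{Disc}_y P|=|\det X|^2\cdot(\text{leading coeff.})^{2d-2}$ from below by a power of $\operatorname{dist}(t,\S)$, yields a matching polynomial-growth estimate for $|X^{-1}|$. (iv) The entries of $\Omega$ were produced in $\Q(t)$ by construction, so after clearing denominators they become ratios of polynomials in $\Z[t]$, placing $\Omega$ in the class of systems defined over $\Q$. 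Flatness $\d\Omega=\Omega\land\Omega$ is automatic since $\Omega=\d X\cdot X^{-1}$ with $X$ a genuine fundamental solution.

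The only point requiring real care is item (iii), since it demands a single polynomial bound in $1/\operatorname{dist}(t,\S)$ valid uniformly on all simply connected semialgebraic $U$; every other step is pure algebra (Bezout in $\Q(t)[y]$, polynomial reduction modulo $P$), and the only analytic input is \polishL ojasiewicz's inequality applied to the discriminant. The minor preliminary reduction to an irreducible $P\in\Z[t,y]$ with integer coefficients, obtained by taking the minimal polynomial of the chosen branch and clearing denominators, is routine.
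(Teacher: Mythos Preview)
Your proof is correct and follows essentially the same route as the paper: both construct the linear Pfaffian system for the vector $(1,y,\dots,y^{d-1})$ by implicit differentiation of $P(t,y)=0$ followed by elimination of $y$ from the denominator (you via the extended Euclidean algorithm in $\Q(t)[y]$, the paper via the resultant Lemma~\ref{lem:elim}, which amount to the same computation), and both observe that the monodromy merely permutes the branches and is therefore quasiunipotent. Your write-up is in fact slightly more complete, since you explicitly exhibit the Vandermonde fundamental matrix and supply the regularity verification via Cauchy's root bound and the \polishL ojasiewicz inequality, points which the paper's proof leaves implicit.
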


Before proving the Proposition, we need a simple algebraic fact which is a cornerstone of the Elimination theory.

\begin{Lem}\label{lem:elim}
Let $S\subset\C^n\times\C$ be an algebraic hypersurface defined by the polynomial equation $S(t,x)=0$ of degree $d=\deg_x S$, with $S$ defined over $\Q$. Then any rational function $R(t,x)=A(t,x)/B(t,x)\in\Q(t,x)$ restricted on $S$ coincides with a fraction of the special form,
\begin{equation*}
    R(t,x)=\frac{U(t,x)}{Q(t)},\qquad U\in\Z[t,x],\ Q\in\Z[t],\quad\deg_x U\le d-1.
\end{equation*}
The new denominator $Q$ depends only on the polynomial $S$ defining the surface and the denominator $B$ of the rational function $R$. The complexity of this representation can also be explicitly controlled in terms of the complexities of $R,S$ and their degrees.
\end{Lem}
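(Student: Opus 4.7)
The plan is to eliminate the $x$-dependence of the denominator by working modulo $S$ in $\Z[t,x]$, using the Sylvester resultant of $B$ and $S$ in $x$, and then reducing the numerator modulo $S$ by Euclidean division in $x$. Set $Q(t) := \operatorname{Res}_x(B, S) \in \Z[t]$. By the standard Bezout identity for resultants (Cramer's rule on the Sylvester matrix), there exist $\alpha, \beta \in \Z[t,x]$ with $\deg_x \alpha < d$ and $\deg_x \beta < \deg_x B$ such that
\[
\alpha(t,x)\, B(t,x) + \beta(t,x)\, S(t,x) = Q(t).
\]
Provided $Q \not\equiv 0$, this yields $B \alpha \equiv Q \pmod S$, so on $S$ one has $R = A/B \equiv A\alpha / Q$. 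The denominator $Q$ thus depends only on $S$ and $B$, as required. In the degenerate case $\operatorname{Res}_x(B, S) \equiv 0$, $B$ and $S$ share an irreducible factor in $\Q(t)[x]$; either it divides $A$ as well and is first cancelled from $R$, or $R$ is undefined on the corresponding component of $S$ and the statement is vacuous there.

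Next, reduce the numerator $A\alpha$ modulo $S$ in the variable $x$. Writing $S = s_d(t)\, x^d + \cdots + s_0(t)$, Euclidean division in $\Q(t)[x]$ produces $A\alpha = S \cdot P + U$ with $\deg_x U < d$. Clearing denominators by a suitable power $s_d^N$ yields $s_d(t)^N \cdot A\alpha = S \cdot P_0 + U_0$ in $\Z[t,x]$, so modulo $S$ we obtain $R \equiv U_0 / (s_d^N\, Q)$. Absorbing $s_d^N$ into the denominator gives $R = U/Q'$ with $U \in \Z[t,x]$, $Q' \in \Z[t]$, and $\deg_x U < d$, which is exactly the conclusion.

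Complexity control follows standard elimination-theory lines: $Q$ is the determinant of a Sylvester matrix of size $\deg_x B + d$ whose entries are polynomials in $t$, so Hadamard's inequality bounds $\deg_t Q$ and the heights of its integer coefficients in terms of the data of $B$ and $S$; the cofactor expressions for $\alpha$ and $\beta$ satisfy analogous bounds; and the subsequent Euclidean division in $x$ inflates degrees and heights by factors polynomial in the number of steps and in $\log s_d$. All final bounds on the complexity of $U$ and $Q'$ are polynomial in the complexities and degrees of $R$ and $S$.

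The main obstacle is really the first step — securing the resultant identity with coefficients in $\Z[t,x]$ (not merely $\Q(t,x)$) together with sharp degree control on $\alpha$ and $\beta$, and ensuring that the denominator $Q$ produced this way depends only on $B$ and $S$, not on $A$. Once this is in hand, the remaining reduction and complexity estimates are textbook manipulations.
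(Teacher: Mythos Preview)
Your proof is correct and follows essentially the same route as the paper's: take the resultant $Q(t)=\operatorname{Res}_x(B,S)$, use the B\'ezout identity $\alpha B+\beta S=Q$ to replace $1/B$ by $\alpha/Q$ on $S$, then reduce $A\alpha$ modulo $S$ by Euclidean division in $x$ and clear the remaining $t$-denominators. If anything, you are slightly more careful than the paper about integrality (working directly with the Sylvester cofactors in $\Z[t,x]$ and tracking the power of the leading coefficient $s_d$) and about the degenerate case $Q\equiv 0$.
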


\begin{proof}[Proof of the Lemma]
Consider the resultant of $S$ (the equation) and $B$ (the denominator) considered as polynomials in $x$ with coefficients in the field $\Q(t)$. By definition, this is the element of the field $\Q(t)$ representable as the linear combination,
\begin{equation*}
    Q(t)=a(t,x)S(t,x)+b(t,x)B(t,x),\qquad a,b\in\Q(t,x),
\end{equation*}
with $\deg_x a<\deg_x B$, $deg_x b< \deg_x S$. The resultant $Q(t)$ vanishes if and only if $S(t,\cdot)$ and $B(t,\cdot)$ have a common $x$-root.

Evaluating this identity on the hypersurface $S$, we conclude that $1/B$ equals the polynomial $b/Q$. Dividing $A(t,x)b(t,x)$ with remainder by $S(t,x)$ (using the Euclid algorithm) in the ring $\Q(x)[t]$, we find $U,V\in\Q(x)[t]$ such that $Ab=VS+U$ with the required bound for the degree $\deg_x U\le\deg_x S$. Multiplying the numerator and denominator by a suitable term from $\Z[x]$, we can guarantee that the ratio involves polynomials in $t,x$ with integer coefficients.
\end{proof}

\begin{proof}[Proof of the Proposition]
The algebraic equation $P(t_1,\dots,t_n,y)=0$ defines an algebraic surface in $\C^n\times\C$, and its exterior derivative $\d P=0$ implies the Pfaffian equation
\begin{equation*}
    \d y=-\frac1{P'(t,y)}\sum_{k=1}^n P_k\,\d t_k,\qquad P'=\frac{\partial P}{\partial y},\quad P_k=\frac{\partial P}{\partial t_k},
    \quad P',P_k\in\Q[t,y].
\end{equation*}
Applying Lemma~\ref{lem:elim}, we can replace the restriction of this equation on $P=0$ by a rational Pfaffian equation whose denominator $\Delta(x)$, the resultant of $P(t,y)$ and $P'(t,y)$ as polynomials in $y$ (the discriminant of $P(t,\cdot)$), depends only on $t$ and the numerator has degree in $y$ not exceeding $d-1$:
\begin{equation}\label{ode-alg}
    \d y=\frac1{\Delta(t)}\sum_{k=1}^n U_k(t,y)\,\d t_k,\qquad U_k\in\Z[t,y],\qquad \deg_y U_k\le d-1.
\end{equation}
The same procedure can be applied to the Pfaffian equation \eqref{ode-alg} multiplied by any power $y^{j-1}$: the result will be a rational Pfaffian form for the exterior derivative $\d (y^j)$, $j=2,3,\dots$ whose denominator is still the same discriminant $\Delta(t)$ and the numerators are polynomials of degree $\le d-1$ in  $y$.

In other words, we obtain a \emph{linear} system of Pfaffian equations
$$
 \d x_j=\frac1{\Delta(t)}\sum_{k=1}^{d}\sum_{l=1}^n u_{jkl}(t) x_k\,\d t_l
$$
for the variables $x_1,\dots,x_n$, $x_j=y^{j-1}$, $j=1,\dots,n$, with coefficients from $\Q(t)$ (obviously, $\d x_1=0$). The algorithm is explicit and allows for estimation of the corresponding complexity.

The monodromy of this linear systems simply permutes between themselves the branches of the algebraic function. Any such transformation is cyclic (its Jordan blocks are cycles of period no greater than $n$) hence the corresponding monodromy is quasiunipotent for \emph{any} loop.
\end{proof}

\begin{Cor}
The \emph{general algebraic function} defined by a ``universal polynomial equation of degree $n$'',
\begin{equation}\label{upe}
    t_0\,y^n+t_1\,y^{n-1}+\cdots+t_{n-1}\,y+t_n=0,
\end{equation}
is a Q-function of its arguments $t_0,\dots,t_n$ considered as homogeneous coordinates on the projective space $\C P^n$. \sq
\end{Cor}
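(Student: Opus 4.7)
My plan is to reduce the statement directly to Proposition~\ref{prop:alg-pf}. In the affine chart $\{t_0\ne 0\}\subset\C P^n$ with coordinates $s_i:=t_i/t_0$ for $i=1,\dots,n$, the universal equation \eqref{upe} takes the monic form $y^n+s_1y^{n-1}+\cdots+s_n=0$ with coefficients in $\Z[s_1,\dots,s_n]$. This exhibits $y=y(s_1,\dots,s_n)$ as a multivalued algebraic function defined over $\Q$, and Proposition~\ref{prop:alg-pf} then supplies the required flat, regular, quasiunipotent Pfaffian system defined over $\Q$ whose fundamental solution matrix carries the tuple $(1,y,y^2,\dots,y^{n-1})$. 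By Definition~\ref{def:Q}, this qualifies $y$ as a Q-function.

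The one point meriting some care is that the construction in Proposition~\ref{prop:alg-pf} is carried out in a single affine chart, whereas Definition~\ref{def:Q} refers to a Q-system on all of $\C P^n$. I would argue that ``defined over $\Q$'' is a single-chart property by definition, that flatness is local, and that the monodromy of the linear system permutes the $n$ branches of $y$ cyclically for \emph{any} loop in the complement of the singular locus---not merely for small ones---so that quasiunipotence holds globally. Regularity at the hyperplane $\{t_0=0\}$ is immediate from the algebraic nature of $y$: the powers $y^j$ grow at worst polynomially along any sector approaching a singular point. Alternatively one can redo the construction in the chart $\{t_j\ne 0\}$ for any $j>0$, where the universal equation is again a polynomial equation with integer coefficients in the new variables $u_i:=t_i/t_j$, and apply Proposition~\ref{prop:alg-pf} verbatim; Kashiwara's theorem then guarantees that quasiunipotence verified at one smooth point per irreducible component of the polar divisor propagates to all small loops.

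I do not expect any genuine obstacle in this argument; the corollary is essentially a worked instance of Proposition~\ref{prop:alg-pf}. The only subtlety worth calling out is the homogeneous-coordinate interpretation---equation \eqref{upe} is invariant under the common scaling $(t_0,\dots,t_n)\mapsto(\l t_0,\dots,\l t_n)$, so its roots $y$ really are well-defined functions of the projective coordinates on $\C P^n$, which is what makes it meaningful to view the resulting Q-function as living on $\C P^n$ rather than on a single affine chart.
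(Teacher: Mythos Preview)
Your proposal is correct and matches the paper's intent: the corollary is marked in the paper with a terminal \sq and no proof, signalling that it is an immediate consequence of Proposition~\ref{prop:alg-pf}. Your reduction via the affine chart $\{t_0\ne 0\}$ to a monic equation with coefficients in $\Z[s_1,\dots,s_n]$ is exactly the intended one-line argument; the additional care you take with regularity at $\{t_0=0\}$ and global quasiunipotence is sound but more than the paper bothers to spell out.
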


\subsection{Special functions}
Hypergeometric and Riemann P-functions also are Q-functions for the rational values of the parameters almost by definition: the second order linear equation
\begin{equation*}
    t(t-1)\cdot\frac{\d^2 y}{\d t^2}+[c-(a+b+1)t]\cdot\frac{\d y}{\d t}-ab\cdot y=0
\end{equation*}
has three regular singular points at $t=0,1,\infty$ and the corresponding residues are rational for rational $a,b,c\in\Q$. Thus a vast number of special functions turns out to be Q-functions and admit explicit bounds on the total number of roots (depending on $a,b,c$).

However, as a function of all four variables $t,a,b,c$, the Riemann P-function \emph{is not a Q-function}: the parametric family above {is not} isomonodromic and hence cannot be transformed to an integrable Pfaffian system on $\C P^4$. Besides, some solutions grow faster than polynomially as $(a,b,c)\to\infty$.

\subsection{Periods as Q-functions}
The first general example of transcendental Q-functions is provided by Abelian integrals
\begin{Def}
An \emph{Abelian integral} is the period $I(\G,\omega)$ of a rational 1-form $\omega\in\bigland^1(\C P^2)$ over an algebraic cycle on an algebraic curve $\G\subset\C P^2$, considered as the function of all relevant data.
\end{Def}
Recall that an algebraic cycle is an element $c$ of the homology group $H_1(\G,\mathbb Z)$ of an algebraic curve $\G\subseteq\C P^2$. If the curve is nonsingular, then this cycle (as a locally constant section of the Gauss--Manin connection) continuously depends on the curve $\G$ in a natural way. In an affine chart $(x,y)$ on $\C P^2$ an Abelian integral takes the form
\begin{equation}\label{ai-gen}
    I(H,Z,P,Q)=\oint_{c\subseteq\{H(x,y)=0\}}\frac{P(x,y)\,\d x+Q(x,y)\,\d y}{Z(x,y)},
    \qquad H,Z,P,Q\in\C[x,y].
\end{equation}
The function $I$ is naturally defined on an the Zariski open subset of nonsingular algebraic curves of a given degree $n$, parameterized by points of the suitable projective space (coefficients of the polynomial $H$) and the projective space of rational forms of a given degree $\le m$.

\begin{Thm}\label{thm:Q-pic-fuchs}
The Abelian integral \eqref{ai-gen} is a Q-function of its arguments. The dimensions, degree and complexity of this Q-function are bounded by a polynomial in the degrees of $P,Q,Z,H$.
\end{Thm}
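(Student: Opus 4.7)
The plan is to realize $I(H,Z,P,Q)$ as an entry of the period matrix of the \emph{Gauss--Manin connection} on the first relative de Rham cohomology of the family $\G_t=\{H=0\}\ssm\{Z=0\}$ parametrized by the coefficients $t$ of $(H,Z,P,Q)$, and to verify that this connection satisfies each of the four defining properties of a Q-system listed after Definition~\ref{def:Q}. Over the Zariski-open stratum $\mathcal U\subset\C P^N$ where $\G_t$ is smooth, irreducible, and transverse to $\{Z=0\}$, the relative cohomology is a flat vector bundle whose rank $n$ is polynomial in $\deg H,\deg Z$ (by a Koszul / Milnor-number count). Fix a $\Z$-rational basis $[\omega_1],\dots,[\omega_n]$ of this bundle; the period matrix $X_{ij}(t)=\oint_{c_i(t)}\omega_j(t)$ then satisfies a Pfaffian system $\d X=\Omega X$, where $\Omega$ arises by differentiating each $\omega_j$ in the parameters $t$ and reducing the result modulo exact forms back to the fixed basis. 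This reduction is the \emph{Griffiths--Dwork algorithm}: any rational 1-form on $\G_t$ is rewritten, modulo $\d$-exact forms, as a $\Q(t)$-linear combination of the $\omega_j$ by polynomial division in the Jacobian ideal of $H$ and in $Z$. This is elimination-theoretic, directly analogous to Lemma~\ref{lem:elim}, producing denominators that are resultants of $\Z[t,x,y]$-polynomials; hence $\Omega$ is rational and defined over $\Q$. The original integral $I$ itself is obtained as a $\Q(t)$-linear combination of the $X_{ij}$, using the expansion of $(P\,\d x+Q\,\d y)/Z$ in the basis $\omega_j$.

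The remaining three conditions for the Q-system property are classical. Flatness of $\Omega$ is automatic in the Gauss--Manin construction, since parallel transport of cohomology classes is defined topologically. Regularity of $\Omega$ along the discriminant locus is the \emph{Regularity Theorem} of Griffiths and Deligne: periods of rational forms grow at most polynomially when the fiber $\G_t$ degenerates. Quasiunipotence is the \emph{Monodromy Theorem} of Landman--Brieskorn--Grothendieck--Deligne: at a generic smooth point of an irreducible component of the discriminant, the curve acquires a single node (or a new tangency with $\{Z=0\}$), Picard--Lefschetz theory describes the local monodromy as the Dehn twist about the vanishing cycle, and this action is unipotent on $H_1$ with Jordan blocks of size $\le 2$. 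By the Kashiwara theorem quoted in the paper, it suffices to check quasiunipotence at one smooth point of each irreducible component of the discriminant, which finishes the verification that the constructed system is a Q-system and $I$ a Q-function of it.

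The real work, and the expected main obstacle, is the \emph{quantitative} part of the statement: one must track, through the Griffiths--Dwork reduction applied to every $\partial_{t_k}\omega_j$, the complexity of all resultants, Koszul syzygies, and cofactors produced, and certify that the final matrix $\Omega$ has bit-complexity polynomial (not exponential) in $\deg H,\deg Z,\deg P,\deg Q$. The rank $n$ and the degree of the discriminant denominator are polynomial by standard intersection-theoretic bounds, so the crux is an effective version of elimination: a bit-length analysis showing that iterated differentiation followed by reduction modulo the Jacobian ideal does not blow the numerators up beyond polynomial size.
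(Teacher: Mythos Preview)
Your outline matches the paper's own treatment almost exactly: the Remark following Theorem~\ref{thm:Q-pic-fuchs} sketches precisely the Gauss--Manin / Picard--Fuchs route you describe (period matrix $X(t)$, flatness automatic, regularity and quasiunipotence classical, rationality of $\Omega=\d X\cdot X^{-1}$), and it singles out the same difficulty you flag at the end---namely that the abstract Grothendieck-type argument ``does not allow to see why the matrix $\Omega$ is defined over $\Q$, and does not allow to estimate its complexity.'' So on the qualitative side there is nothing to compare: you and the paper agree.

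Two points of difference are worth recording. First, the paper does not actually prove Theorem~\ref{thm:Q-pic-fuchs} in the stated generality: it cites \cite{invmath-10} for the special case where the polar locus $\{Z=0\}$ is a fixed line (so that the integrands are \emph{polynomial} 1-forms in an affine chart), and explicitly defers the general rational case to future work (``The general case will be treated elsewhere''). Your proposal, by contrast, aims directly at the general rational integrand via Griffiths--Dwork reduction in the Jacobian ideal of $H$ together with division by $Z$; this is the natural extension, but you should be aware that it goes beyond what the paper (or its reference) actually carries out. Second, the explicit construction in \cite{invmath-10} does not proceed via an abstract Griffiths--Dwork scheme with resultant-type denominators as you suggest; rather, it builds the Picard--Fuchs system by hand for the universal family of polynomial Hamiltonians, using the concrete monomial basis for $\bigland^2(\C^2)/\d H\land\bigland^1(\C^2)$ and the Gelfand--Leray derivative, which makes the $\Q$-structure and the complexity bounds visible by direct bookkeeping. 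Your approach is more conceptual and would, if the effective-elimination estimates go through, handle the general $Z$ in one stroke; the paper's approach is more pedestrian but delivers the polynomial complexity bound without appeal to a general effective Griffiths--Dwork theorem.
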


\begin{Rem}
The fact that the Abelian integrals are regular functions satisfying some linear (Picard--Fuchs) ordinary differential equations with rational coefficients, is well known under much more general assumptions. Consider an arbitrary surjective polynomial map $F:\C^n\to\C^m$, $n\ge m$, denote by $\G_t=F^{-1}(t)$ the fibers of this map, which are affine algebraic varieties. For a generic value of $t$ (i.e., off an algebraic surface $\S\subset\C ^m$) $\G_t$ is nonsingular variety, whose homology is generated by algebraic cycles of dimension $k=n-m$ which locally continuously depend on $t\notin\S$. By a theorem of Grothendieck \cite{groth}, the dual module (cohomology) is generated by the same number of polynomial $k$-forms whose restrictions on $\G_t$ are linear independent outside a proper polynomial hypersurface (we can still denote it by $\S$). Thus the period matrix $X(t)$, whose entries are all periods of the corresponding forms, is well-defined, holomorphic and nondegenerate outside $\S$ matrix function, which grows moderately as $t\to\S$ and is monodromic: after analytic continuation along any loop $\gamma$ around $\S$, the matrix function acquires a right (constant) matrix factor $M_\gamma$ completely determined by the topology of $F$ considered as a bundle. The quasiunipotence of $M_\gamma$ is also a well-known fact. The matrix logarithmic derivative $\Omega=\d X\cdot X^{-1}$ is a rational matrix 1-form on $\C^m$ (hence extends on $\C P^m$) with poles on $\S$, and the first three conditions from Definition~\ref{def:Q} are verified.

However, this quite general argument does not allow to see why the matrix $\Omega$ is defined over $\Q$, and does not allow to estimate its complexity. One has to construct explicitly the matrix $\Omega$ in such a way that its complexity over $\Q$ can be bounded.

This construction was produced in \cite{invmath-10} for the particular case where the polar locus $\{Z=0\}$ of the forms is a fixed line in $\C P^2$ so that the space of all forms is a linear space of polynomial 1-forms in a fixed affine chart. The general case will be treated elsewhere.
\end{Rem}

\subsection{Elementary constructions with Q-functions}
The ring operations (addition and multiplication) preserve the class of Q-functions, affecting only their natural characteristics (degree, dimension, number of variables and complexity).

Indeed, for two Q-systems defined on two different projective spaces by two systems of quasiunipotent flat regular Pfaffian equations
\begin{equation*}
    \d X=\Omega X,\qquad \d Y=\Theta Y,
\end{equation*}
of the sizes $n\times n$ and $m\times m$ respectively, one can form the block diagonal system of size $(n+m)\times(n+m)$,
\begin{equation*}
    \d \begin{pmatrix} X \\ Y \end{pmatrix}=\begin{pmatrix} \Omega&\\&\Theta\end{pmatrix}\cdot\begin{pmatrix}X\\Y\end{pmatrix}
\end{equation*}
which is obviously flat, regular and quasiunipotent (the monodromy group is the Cartesian product of the two individual groups). This system (whose matrix can be denoted by $\Omega\oplus\Theta$) is satisfied by the direct sum $X\oplus Y$ of the corresponding matrix solutions.

The tensor product $X\otimes Y$, also satisfies the linear system of the size $nm\times nm$. To see this, it is more convenient to look at the two systems in the vector (rather than matrix) forms:
\begin{equation*}
    \d x=\Omega x,\quad \d y=\Omega y,\qquad x=(x_1,\dots,x_n),\quad y=(y_1,\dots,y_m).
\end{equation*}
Then the $n\times m$-vector with the coordinates $z_{ij}=x_i y_j$ satisfies the system of linear Pfaffian equations
\begin{equation*}
    \d z_{ij}=\d x_i\cdot y_j+x_i\cdot\d y_j=\sum_{k=1}^n\Omega_{ik}z_{kj}+\sum_{l=1}^m \Theta_{jl}z_{il}. 
\end{equation*}
The matrix elements of the corresponding $nm$-matrix 1-form are certain sums of the matrix elements of the forms $\Omega$ and $\Theta$ respectively. We abbreviate the corresponding computation using the tensor product notation as follows\footnote{In the language of connections one sometimes uses the following notation for the tensor product, $$(\nabla'\otimes\nabla'')(X\otimes Y)=(\nabla'X)\otimes Y+X\otimes(\nabla''Y),\quad \nabla'=\d-\Omega,\ \nabla''=\d-\Theta.$$ This may justify the notation $\Omega\otimes\Theta$  for the matrix form corresponding to the tensor product $\nabla'\otimes\nabla''=\d-\Omega\otimes\Theta$ of the two connections.},
\begin{equation*}
    \d (X\otimes Y)=\d X\otimes Y+X\otimes \d Y=(\Omega\otimes I+I\otimes\Theta)(X\otimes Y).
\end{equation*}
One can easily verify that all characteristic properties of the Q-systems are preserved by the tensor product, with explicit control over the complexity growth.

These elementary properties imply that the extension $\Bbbk(X)=\C(t,X)=\C(t_1,\dots,t_m,\allowbreak X_{11},\dots,X_{nn})$ of the differential field $\Bbbk=\C(t_1,\dots,t_m)$ by Q-functions $X=X(t)$ is a differential field which admits an explicit upper bound for the number of isolated roots of each its element.

\begin{Thm}\label{thm:root-ext}
Let $X$ be any fundamental matrix solution of a fixed Q-system \eqref{pfm} and $\Bbbk(X)$ the differential extension field.

Then for any element $\f\in\Bbbk(X)$ and any one-dimensional triangle $T$ entirely belonging to the domain of analyticity of $\f$, the number of isolated zeros of $\f$ in $T$ is bounded by an explicit function of the degree $\deg_{t,X}\f:$
\begin{equation}\label{field-ext}
    \forall T\subset\C P^m\ssm\S,\ \forall\f\in\Bbbk(X)\qquad \#\{t\in T:\f(t)=0\}\le 2^{2^{\operatorname{Poly(\delta)}}},\qquad \delta=\deg_{t,X}\f.
\end{equation}
The polynomial $\operatorname{Poly}(\delta)$ explicitly depends on the Q-function $X$ via the dimension, degree and complexity of the latter.
\end{Thm}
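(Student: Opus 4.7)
\medskip

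\noindent\textbf{Proof plan.} The plan is to reduce the statement to a direct application of Theorem~\ref{thm:main} by showing that, after clearing denominators, any $\varphi\in\Bbbk(X)$ becomes a linear combination of entries of a fundamental matrix solution of a new, larger Q-system whose dimension, degree and complexity are polynomially bounded in $\delta=\deg_{t,X}\varphi$ and in the corresponding data of the original system $\Omega$.

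\medskip

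First, write $\varphi=P(t,X)/Q(t,X)$ with $P,Q$ polynomials in the $n^2$ entries of $X$ of degree $\le\delta$ with coefficients in $\Q(t_1,\dots,t_m)$ (clearing denominators of the coefficients, we may take $P,Q$ with coefficients in $\Z[t]$ of controlled complexity). On a triangle $T$ inside the domain of analyticity of $\varphi$, the denominator $Q(t,X(t))$ is not identically zero, so the isolated zeros of $\varphi$ on $T$ form a subset of the isolated zeros of $P(t,X(t))$ on $T$, and it suffices to bound the latter.

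\medskip

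Next, invoke the closure properties recalled just before the theorem. By iterating the tensor-product construction, the symmetric tensor powers $X^{\otimes k}$, $k=0,1,\dots,\delta$, assembled into a block-diagonal matrix via direct sum, satisfy a flat regular Q-system $\Omega^{(\delta)}$ whose monodromy is quasiunipotent: indeed, the monodromy of $X^{\otimes k}$ is $M^{\otimes k}$, whose eigenvalues are products of roots of unity. The dimension of $\Omega^{(\delta)}$ is $\sum_{k\le\delta}\binom{n^2+k-1}{k}=O(\delta^{n^2})$, its degree does not exceed that of $\Omega$, and its complexity is explicitly bounded by a polynomial in the complexity of $\Omega$ and in $\delta$ (the entries of $\Omega^{(\delta)}$ are sums of entries of $\Omega$, with coefficients of size at most $\delta$, produced by the Leibniz rule). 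Tensoring further with the trivial Q-system whose matrix solution has all monomials $t^\beta$ of degree $\le\deg_t P$ as entries (polynomials are Q-functions, as noted in the paper) yields a Q-system $\widetilde\Omega$ whose fundamental solution $\widetilde X$ contains every monomial $t^\beta X^\alpha$ occurring in $P$ as one of its entries. Thus $P(t,X(t))$ is a $\C$-linear combination (with coefficients bounded by the complexity of $\varphi$) of entries of $\widetilde X$, and we apply Theorem~\ref{thm:main} to $\widetilde\Omega$ and the triangle $T$ to obtain a bound of the form
\begin{equation*}
    \#\{t\in T:P(t,X(t))=0\}\le \tilde s^{\textstyle 2^{\operatorname{Poly}(\tilde d,\tilde n,m)}},
\end{equation*}
in which $\tilde s,\tilde d,\tilde n$ are all polynomial in $\delta$ and in the fixed data of $\Omega$. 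Substituting these polynomial expressions into the exponent produces a bound of the asserted shape $2^{2^{\operatorname{Poly}(\delta)}}$.

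\medskip

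\noindent\textbf{Main obstacle.} The conceptual steps are standard once the closure properties of Q-systems are in hand; the real work is the \emph{quantitative bookkeeping}. One has to verify that the direct sum, the symmetric tensor power, and the multiplication by the polynomial Q-system preserve each of the four conditions in Definition~\ref{def:Q} \emph{and} keep the numerical invariants (dimension, degree, complexity) under explicit polynomial control in $\delta$. Flatness is formal; quasiunipotence is immediate from $M^{\otimes k}$; regularity follows from $|\widetilde X|\le |X|^{\delta}\cdot\|t\|^{\deg_t P}$ combined with the growth bound \eqref{regular}; only the bound on complexity requires a careful Leibniz-rule computation. Once this is done, the invocation of Theorem~\ref{thm:main} is mechanical and the polynomial $\operatorname{Poly}(\delta)$ in \eqref{field-ext} can in principle be extracted explicitly from the degree-$30$ polynomial appearing there.
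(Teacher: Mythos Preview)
Your proposal is correct and follows essentially the same route as the paper's (very terse) proof: build a large Q-system whose solution entries span all monomials $t^\alpha X^\beta$ with $|\alpha|+|\beta|\le\delta$ via the direct-sum and tensor-product constructions just described, then invoke Theorem~\ref{thm:main}. One small inaccuracy: you cannot in general take $P,Q$ with coefficients in $\Z[t]$, since $\varphi\in\Bbbk(X)=\C(t,X)$ may have arbitrary complex coefficients, but this is harmless because the bound in Theorem~\ref{thm:main} is uniform over all $\C$-linear combinations of the entries of $\widetilde X$, so the ``complexity of $\varphi$'' never enters the estimate and only the complexity of $\widetilde\Omega$ (which is built from $\Omega$ alone) matters.
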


\begin{proof}
All monomials of the form $t^\alpha X^\beta$ with $|\alpha|+|\beta|\le \delta$ (in the standard multi-index notation) satisfy a large Q-system whose dimension, degree and complexity are bounded by some explicit polynomial expressions in the dimension, degree and complexity of the Q-function $X$, and the degree $\delta$. The rest follows from the Main Theorem~\ref{thm:main}.
\end{proof}

\subsection{Transformations of Q-functions by rational and algebraic maps}
If $X$ is a matrix Q-function on $\C P^m$ and $f:\C P^l\to\C P^m$ a rational map defined over $\Q$, then the natural pull-back $f^*X$ will be a Q-function on $\C P^l$, defined by the Pfaffian system with the matrix 1-form $f^*\Omega$.

Indeed, the conditions of flatness and regularity are obviously satisfied. The pullback matrix 1-form $f^*\Omega$ is clearly defined over $\Q$. The only condition that has to be verified is that on the monodromy. But the image of a small loop by a rational map is at worst a \emph{multiple} of a small loop, hence the eigenvalues of the corresponding monodromy are powers of the initial eigenvalues. 

Yet a much more interesting question is to find out under what conditions the class of Q-functions is closed by composition, in particular, when the substitution of the independent variables $t=F(s)$ with algebraic functions $F$ transforms Q-functions again to Q-functions. Among other things, this would allow to replace ``line triangles'' in the formulation of the counting problem, by arbitrary semialgebraic triangles.

\begin{Thm}\label{thm:Q-alg}
Consider a Q-system \eqref{pfm} and a transformation defined by an algebraic change of independent variables,
\begin{equation}\label{alg-tr}
    t_j=t_j(s_1,\dots,s_l),\ j=1,\dots,m, \qquad P_j(t_j,s_1,\dots,s_l)=0,\ P_j\in\Q[t_j,s].
\end{equation}
Then the multivalued matrix function $X(t(s))$ is a Q-function of the new variables.
\end{Thm}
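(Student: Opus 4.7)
The plan is to mimic the strategy used in Proposition~\ref{prop:alg-pf} but to apply it simultaneously to $X$ and to the algebraic functions $t_j(s)$. The idea is to enlarge the dependent variable vector so that it contains not only the entries of $X$ but also all products $t^\alpha X_{ij}$ with $\alpha_j < d_j := \deg_{t_j} P_j$; the algebraic relations $P_j(t_j,s)=0$ can then be used to close this enlarged system into a linear Pfaffian system in $s$ alone, with coefficients in $\Q(s)$.

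First I would differentiate the relations $P_j(t_j,s)=0$ to get, on the smooth part of the graph variety $V = \{(s,t) : P_j(t_j,s)=0,\ j=1,\dots,m\}\subset\C P^l\times\C P^m$, the Pfaffian equations
$dt_j = -(\partial_{t_j}P_j)^{-1}\sum_k \partial_{s_k}P_j\,ds_k$,
which are rational 1-forms over $\Q$ in the variables $(s,t)$. Next I would introduce the enlarged vector $Y = \{t^\alpha X_{ij}\}$ indexed by multi-indices $\alpha$ with $\alpha_j < d_j$ and by entries of $X$, and compute $dY$ using the original relation $dX = \Omega(t) X$ and the substitution for $dt_j$ just derived. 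The result is a Pfaffian system whose coefficients are rational over $\Q$ in $(s,t)$; applying Lemma~\ref{lem:elim} term by term, I would reduce each coefficient modulo the polynomials $P_j$ so that only bounded powers $t^\beta$ with $\beta_j<d_j$ survive, and the resulting expressions become rational in $s$ alone, linear in $Y$. This yields a closed linear Pfaffian system $dY = \tilde\Omega(s)\,Y$ on $\C P^l$ with $\tilde\Omega$ rational over $\Q$.

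Then I would verify the four defining properties of a Q-system for $(Y,\tilde\Omega)$. \emph{Defined over $\Q$}: by construction. \emph{Flatness}: the identity $d\tilde\Omega=\tilde\Omega\wedge\tilde\Omega$ follows from the flatness of the pullback of $\Omega$ to the smooth part of $V$ via the projection to $\C P^m$, since $d$ and $\wedge$ commute with pullback, and the enlargement $X\mapsto Y$ is a fixed algebraic transformation that preserves the consistency $d^2=0$. \emph{Regularity}: the new polar locus of $\tilde\Omega$ in $\C P^l$ is a proper algebraic subvariety (consisting of images of $\S$ under the algebraic map $t(s)$ together with the critical values of the projection $V\to\C P^l$), and the \polishL ojasiewicz inequality converts the moderate-growth estimate \eqref{regular} for $X$ into a moderate-growth estimate for $X(t(s))$ in $s$. \emph{Quasiunipotence}: a small loop around a smooth point of the new polar locus projects, via the algebraic correspondence, to some integer power of a small loop around a point of $\S$; its monodromy is therefore a power of a quasiunipotent operator, hence still quasiunipotent, and the Kashiwara theorem propagates this property to small loops around singular points of the new polar locus. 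Finally $X(t(s))$ is recovered from $Y$ as the sub-block corresponding to $\alpha=0$, so it is a Q-function of $s$.

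The main obstacle I expect is organizing the elimination step cleanly enough to guarantee that every rational-in-$(s,t)$ coefficient appearing in $dY$ reduces to a $\Q(s)$-coefficient multiplied by admissible monomials $t^\beta$ (with $\beta_j<d_j$) that can be absorbed back into entries of $Y$; this requires choosing the enlargement large enough to be stable under both differentiation and multiplication by the rational expressions for $dt_j$, and handling the resultant constructions of Lemma~\ref{lem:elim} uniformly across all $j$. A secondary, quantitative, concern is tracking how the complexity of $\tilde\Omega$ grows in terms of the complexities and degrees of the $P_j$ and the original Q-system — this is needed if one wants explicit bounds via Theorem~\ref{thm:main}, but for the purely qualitative claim that $X(t(s))$ is a Q-function, the construction sketched above is sufficient.
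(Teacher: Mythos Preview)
Your approach is essentially the paper's own: differentiate the algebraic relations to express $\d t_j$ rationally in $(s,t)$, enlarge the dependent variables by the bounded monomials in $t$ (the paper adjoins $t_j,\dots,t_j^{d_j-1}$ as new unknowns, you take the products $t^\alpha X_{ij}$ directly; these differ only by the tensor-product construction of \secref{sec:toward}), and then invoke Lemma~\ref{lem:elim} iteratively to push all $t$'s out of the denominators so the closed system has coefficients in $\Q(s)$.

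There is one genuine gap in your quasiunipotence step. You claim that a small loop around a smooth point of the new polar locus ``projects, via the algebraic correspondence, to some integer power of a small loop around a point of $\S$.'' This fails precisely at the ramification locus of the algebraic map $s\mapsto t(s)$: there the branches of $t(s)$ are permuted, so the image of a small $s$-loop in $t$-space is \emph{not} a closed curve at all, let alone a power of a small loop around $\S$. The paper handles this by observing that after a finite iteration $\gamma\mapsto\gamma^N$ (with $N$ the order of the branch permutation) the image \emph{does} close up to a small loop in $t$-space; hence $M_\gamma^N$ is quasiunipotent, and therefore so is $M_\gamma$. Once you insert this iteration argument, your proof coincides with the paper's.
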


\begin{proof}
Consider the Pfaffian system \eqref{pfm} in the affine chart $(t_1,\dots,t_m)$:
\begin{equation*}
    \d x_i=\sum  R_{ijk}(t)x_j\,\d t_k,\qquad  R_{ijk}\in\Q(t_1,\dots,t_m),\ \deg R_{ijk}\le d.
\end{equation*}
Differentiating the identities \eqref{alg-tr}, the differentials $\d t_j$ can be replaced by rational forms,
\begin{equation}\label{dtj}
    \d t_j=\sum_k S_{jk}\,\d s_k, S_{jk}\in\Q(t,s).
\end{equation}
After substitution we obtain the linear Pfaffian system with the ``dependent variables'' $x_1,\dots,x_n$ and independent variables $s_1,\dots,s_l$, although the coefficients are not rational anymore: they explicitly involve the algebraic functions $t_j(s)$. However, this system can be transformed to a system with coefficients in $\Q(s)$ in a way similar to that used in the proof of Proposition~\ref{prop:alg-pf}.

Denote by $\Bbbk=\Q(s)$ the field of rational functions of the variables $s$. Reducing all equations to the common denominator, we can transform the Pfaffian system to the form
\begin{equation*}
    \d x_i=\frac1{\Delta(t)}\sum_{j,k} Q_{ijk}(t)\,x_j\d t_k,\qquad \Delta,Q_{ijk}\in\Bbbk[t_1,\dots,t_m]
\end{equation*}
with the coefficients $Q_{ijk}$ \emph{polynomial} in $t$. Without loss of generality, we may assume that all such polynomials are of degree $\le d-1$ by virtue of the identities \eqref{alg-tr}.

We first replace the denominator $\Delta(t)$ by an element from the field $\Bbbk=\Q(s)$, by iterated application of Lemma~\ref{lem:elim}. More specifically, we eliminate inductively the variables $t_1,\dots,t_k$ from the denominator, replacing them by the variables $s_j$, using the equations \eqref{alg-tr}. Then the standard division with remainder allows to exclude all powers of $t_j$ of degrees higher than $d_j=\deg_{t_j}P_j$. Finally, the lower degree powers $t_j,t_j^2,\dots,t_j^{d_j-1}$ are declared to be the ``new dependent variables'' governed by the obvious recurrent equations $\d (t_j^r)=r t_j^{r-1}\d t_j$ transformed to the linear Pfaffian form using the identities \eqref{dtj}.

The quasiunipotence the monodromy is proved as follows. Consider a small loop $\gamma:(\C,0)\to(\C P^l,a)$, defined by a holomorphic germ $s=s(z)$, and its composition with the algebraic map $\eta:z\mapsto t=t(s(z))$. Since the algebraic maps are in general multivalued, the latter map in general is multivalued (ramified at the origin) and its restriction on a small circle $|z|=\rho$ non-closed. However, the algebraic map \eqref{alg-tr} has only finitely many branches which are permuted along the small loop $\gamma$. Thus after finitely many iterations the small loop $\gamma^N$ would become a \emph{closed} loop in the $t$-space. On the level of the monodromy operators this means that the monodromy $M_\gamma$ of the composite system along the small loop $\gamma$ is quasiunipotent when raised to some finite power $N$, i.e., all eigenvalues of $M_\gamma^N$ are roots of unity. But then the same is true for the eigenvalues of $M_\gamma$ itself.
\end{proof}

\begin{Rem}
Inspection of the above proof shows that one can use a more general transformation of a ``triangular'' form, where the coordinate functions of the algebraic transformation are defined by the ``Pfaffian chain'' of the polynomial equations
\begin{equation}
  P(t_1,s)=0,\quad P_2(t_1,t_2,s)=0,\quad\dots\quad P_m(t_1,\dots,t_m,s)=0,\quad s=(s_1,\dots,s_l).
\end{equation}
It would be very interesting to know whether the restriction of a Q-system on an arbitrary projective subvariety $Z$ defined over $\Q$, can always be represented by a Q-system (with eventually larger number of the dependent variables). On the other hand, probably there are compositions of Q-functions, which are themselves not Q-functions.
\end{Rem}

\begin{Rem}
The possibility of introducing extra parameters and additional auxiliary equations (algebraic or Pfaffian) implies that in the definition of a Q-system one can replace the assumption that $\Omega$ is defined over $\Q$ by formally weaker but in fact equivalent condition that it is defined over the ring of algebraic numbers $\overline\Q$.
\end{Rem}

\section{Applications to the Hilbert 16th problem}

Although the above exposition was aimed at showing that the class of Q-functions is rich enough and important by itself to merit an investigation, still by far the strongest motivation for its study comes from the theory of planar real algebraic foliations and bifurcations of limit cycles.
We recall briefly the connection between these two areas. Detailed exposition can be found in numerous textbooks, among them \cite{thebook}.

\subsection{Crash course on perturbations of Hamiltonian foliations}\label{sec:IH16}
If $H\in\R[x,y]$ is a real polynomial in two variables with isolated critical points, the Pfaffian equation $\d H=0$ defines a real foliation with singularities on the plane $\R^2$ which extends to the projective plane $\R P^2$. Leaves of this foliation are connected components of the real level curves $\{H=t\}$. This foliation is integrable: the holonomy associated with each compact leaf (real algebraic oval) is trivial (identical), in particular, all nearby leaves are also compact and closed. In the classical language of the differential equations, there are no limit cycles for the corresponding system of Hamiltonian equations.

This changes if we apply a small one-parameter perturbation and consider the Pfaffian equation
\begin{equation}\label{pert-H}
    \d H+\e\omega=0,\qquad \omega=P\,\d x+Q\,\d y,\qquad P,Q,H\in\R[x,y],\quad \e\in(\R^1,0).
\end{equation}
Since the holonomy of a compact leaf is analytically dependent on the parameter $\e$, it can be expanded as a converging series. Using the function $H$ as the local chart on the transversal, we can write
\begin{equation}\label{poi}
    \eta(t,\e)=t+\e I_1(t)+\e^2 I_2(t)+\cdots,
\end{equation}
where $I_1(t),I_2(t),\dots$ are the first and the higher \emph{variations} of the holonomy, functions, defined on the nonsingular compact leaves of the initial integrable foliation and analytically depending on these leaves. A simple calculation known already to Poincar\'e shows that
\begin{equation}\label{ai}
    I_1(t)=\oint_{c_t} \omega, \qquad c_t\Subset\{H=t\}\subset\R^2
\end{equation}
is the real Abelian integral, cf.~with \eqref{ai-gen}. If the first variation is nontrivial, $I_1(t)\not\equiv0$, then the compact isolated leaves (limit cycles) of the perturbed foliation \eqref{pert-H} are rather faithfully tracked by the real isolated roots of the period \eqref{ai} (an accurate statement involves the positive distance from the singular locus of the integrable foliation). If $I_1\equiv0$, then one should find the lowest order nontrivial variation $I_k$, $k\ge 2$, and study its zeros.

This construction justifies the following ``linearized'' (``infinitesimal'') relaxed version of the Hilbert's 16th problem: find an upper bound for the number of isolated zeros of the Abelian integral \eqref{ai} in terms of the degrees $\deg H$ and $\deg \omega=\max(\deg P,\deg Q)$.

Clearly, the role of the two ingredients, the Hamiltonian and the perturbation form, is different: e.g., as a function of the form, the integral \eqref{ai} is linear, while its dependence on $H$ is very tricky. As an intermediate step, one can split the problem, fix $H$ and consider the problem for various 1-forms $\omega$ of different degrees.

One can show that for each fixed $H$ of degree $n$ one can find finitely many polynomial 1-forms which generate all integrals \eqref{ai} as a module over the ring $\C[t]$ of polynomials: for a generic $H$ one can take $(n-1)^2$ monomial forms $\omega_i$ whose differentials $\d \omega_i$ span the finite-dimensional quotient space $\bigland^2(\R^2)/\bigland^1(\R^2)\land\d H_n$, where $H_n$ is the principal homogeneous part of $H$. The assertion means that for any polynomial 1-form $\omega$ of degree $d$ and any continuous family of cycles $c_t\subseteq\{H=t\}$ there exist polynomials $q_j\in\R[t]$ such that
\begin{equation}\label{env}
    \oint_{c_t}\omega=\sum_j q_j(t)\cdot\oint_{c_t}\omega_j,\qquad q_j\in\R[t],\ \deg q_j\le d/n.
\end{equation}
This means that the Abelian integrals of \emph{all polynomial 1-forms} belong to the Picard--Vessiot differential extension field $\Bbbk (X)$, where $X$ is the period matrix of the $(n-1)^2$-tuple of 1-forms $\omega_j$ over all cycles on the complex algebraic curves $\{H=t\}\subset\C^2$.

\subsection{Linear bounds}
Theorem~\ref{thm:Q-pic-fuchs} allows to produce an upper bound for the number of isolated roots of a combination \eqref{env} which would be double exponential in $d$. However, for quite some time it was known that this number admits an \emph{asymptotic} estimate which is \emph{linear} in $d$: it does not exceed $C(n) d+C'(n)$, see \cite{petrov} and \cite[Theorem~6.26, pp.177--183]{zoladek}. Here the ``constant'' $C(n)$, which depends only on the degree $n=\deg H$, is completely explicit, but the other constant $C'(n)$ is only proved to exist, with the proof giving absolutely no clue on how it can be estimated.

It turns out that this is a general feature of Q-functions. Consider a Q-system \eqref{pfm} on $\C P^m$ and the module $\mathscr M(X)$ its solution matrix $X(t)$ generates over the field $\C(t_1,\dots,t_m)$ of rational functions on the domain of the Q-system: by definition, it consists of all linear combinations $\sum_{i,j}q_{ij}(t)X_{ij}(t)$ with \emph{rational} coefficients $q_{ij}(t)=q_{ij}(t_1,\dots,t_m)$. This representation, albeit not unique, induces a filtration of this module by the degrees of the coefficients $q_j$: we will say that an element $f\in \mathscr M(X)$ has degree $\delta=\deg_t f$, if it can be represented as a linear combination as above with $\deg g_{ij}\le \delta$ for all $i,j$.

\begin{Thm}[\cite{galgal}, to appear]\label{thm:gg}
For any fixed Q-system of degree $d$ dimension $n$ and complexity $s$ on $\C P^m$, the number of isolated zeros of any element of degree $\delta$ from the module $\mathscr M(X)$ is bounded by an explicit expression
\begin{equation}\label{gg-bd}
    C(n,m,d)\cdot\delta +C'(n,m,d,s).
\end{equation}
Here $C,C'$ are two explicit functions bounded as follows: the leading coefficient $C(n,m,d)$ grows no faster than a double exponential $2^{2^{\scriptstyle\mathrm{Poly}(m,d)}}$ independent of $n$, while the constant term $C'(n,m,d,s)$ is bounded by $s^{N(n,m,d)}$ with the degree $N$  bounded by a tower of \emph{five} exponentials in $(n,m,d)$.
\end{Thm}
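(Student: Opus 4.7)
The strategy is to reduce the multidimensional problem to a one-variable problem by restriction to a projective line, and then separate the zero count into a linear-in-$\delta$ Bezout/Rolle contribution on the part of the line away from the singular locus, plus a $\delta$-independent structural contribution coming from the neighborhoods of the Fuchsian singular points on the line.

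\textbf{Step 1: restriction to a line.} Fix a triangle $T\subset L\subset\C P^m$, where $L$ is an arbitrary projective line not contained in the polar locus $\S$. The restriction $\Omega|_L$ defines a one-variable Q-system of dimension $n$ with at most $O(d)$ Fuchsian singularities (the points $L\cap\S$, bounded by Bezout), whose complexity is polynomially bounded in $s$ via the elimination procedure of Lemma~\ref{lem:elim}. The element $f\in\mathscr M(X)$ restricted to $L$ becomes a combination $\sum_{i,j} q_{ij}(t)\,X_{ij}(t)|_L$ with $\deg q_{ij}\le\delta$, so counting its isolated zeros in $T$ is now a one-dimensional problem.

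\textbf{Step 2: linear-in-$\delta$ Bezout/Rolle estimate.} A naive approach would adjoin the monomials $t^k$, $k\le\delta$, obtaining an enlarged Q-system of dimension $n^2(\delta+1)$; applying Theorem~\ref{thm:main} then yields a bound doubly exponential in $\delta$, which is far too weak. Instead, one uses a Rolle-style descent: the complex Rolle lemma of~\cite{jdcs-96}, applied iteratively along a suitable derivation on $\mathscr M(X|_L)$, shows that the isolated zeros of $f$ on any compact subset of $T$ staying at positive distance from $\S\cap L$ are bounded by the zeros of a ``residual'' element, up to a Bezout contribution proportional to $\delta$. The proportionality constant depends only on the topology and degree of the Fuchsian configuration on $L$, i.e.\ on $m$ and $d$, accounting for the leading term $C(n,m,d)\cdot\delta$ and, in fact, yielding $C$ independent of $n$.

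\textbf{Step 3: local contribution and main obstacle.} Near each singular point $a\in\S\cap L$, the quasiunipotent local monodromy forces $X|_L$ to be a combination of branches $(t-a)^{\lambda_j}\log^k(t-a)$ with rational $\lambda_j$ of bounded height. Khovanskii--Gabrielov-style local bounds for such pseudo-polynomial expressions yield a $\delta$-uniform estimate for the zeros of $f$ accumulating to $a$; summing over the $O(d)$ singularities on $L$ produces the constant term $C'(n,m,d,s)$, with complexity entering as $s^{N(n,m,d)}$. The main obstacle will be controlling the complexity cascade through the chain of reductions (restriction to $L$, elimination, Rolle descent, and application of Theorem~\ref{thm:main} to each auxiliary subsystem) so that $N$ fits inside a tower of five exponentials in $(n,m,d)$. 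This demands tight book-keeping of how the complexity propagates through each algebraic manipulation, while keeping the linear coefficient $C$ free of $n$ and $s$ by arranging the Rolle descent so that the ``safe-zone'' combinatorics depend only on the Fuchsian skeleton of the restricted system and not on the internal dimension of its monodromy representation; this quantitative separation is the principal technical difficulty.
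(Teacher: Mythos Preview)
The paper's own account of the proof is brief (the result is cited from a paper ``to appear''), but it names the two ingredients: the Petrov \emph{folding} construction, later refined by Khovanskii, combined with the effective bound coming from Theorem~\ref{thm:Q-alg}. Your proposal does not use either of these, and the substitute you offer in Step~2 does not work as stated.

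The specific gap is in Step~2. A Rolle-type descent along a derivation on $\mathscr M(X|_L)$ does not terminate: the module is closed under differentiation (since $\d X=\Omega X$), so each application of the complex Rolle lemma produces another element of $\mathscr M(X|_L)$ of comparable degree, and there is no ``residual'' element to which the process converges. Nothing in this picture explains why the leading coefficient $C$ should be independent of $n$; a naive Rolle iteration would have to run a number of steps growing with the dimension of the solution space, and that dimension is $n^2$. The Petrov mechanism is different: one applies the argument principle to $f=\sum q_{ij}X_{ij}$ on a slit domain. Along the edges the increment of argument is controlled by the \emph{polynomial} degree $\delta$ of the coefficients, which is where the linear term originates. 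Around each Fuchsian puncture one compares $f$ with its monodromy transform $f^\sigma$; because the local monodromy is quasiunipotent, the difference $f-f^\sigma$ lives in a strictly smaller invariant subspace, and iterating this ``fold'' terminates after a number of steps bounded by the length of the quasiunipotent filtration, which depends on the number of singularities on $L$ (hence on $d$ and $m$) but not on $n$. That is the structural reason $C=C(m,d)$.

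Your Step~3 is closer in spirit to what is actually needed, but the role you assign to it is wrong: the local analysis near the singular points is not a separate additive correction, it is precisely where the folding is performed. The constant term $C'$ then arises because at the bottom of the folding one is left with an auxiliary Q-system (produced via the algebraic manipulations covered by Theorem~\ref{thm:Q-alg}) to which Theorem~\ref{thm:main} is applied; tracking the complexity through these steps is what produces the five-exponential tower.
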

The proof is achieved by combination of the construction of ``folding'' suggested by G.~Petrov and further elaborated by A.~Khovanskii, with the effective bound provided by Theorem~\ref{thm:Q-alg} above. It gives a fully constructive linear bound for the number of limit cycles, born by polynomial perturbations of any degree, from a given polynomial Hamiltonian foliation, as explained in \secref{sec:IH16}. Needless to say, this bound is also tremendously excessive by all expectations.

\subsection{Iterated integrals}
The higher variations $I_k(t)$ in \eqref{poi} can be computed recursively, see \cite{bautin,francoise}. For a generic Hamiltonian $H$ it can be shown that, under the inductive assumption that $I_1(t)\equiv\cdots\equiv I_{k-1}(t)\equiv 0$, the function $I_k(t)$ can be expressed as an Abelian integral of a polynomial 1-form of degree growing with $k$ \cite{gavrilov}, see also \cite{thebook}. Thus the number of limit cycles that can be tracked using variations of any given order $\le k$ (and, as before, distant from the singular locus), can be estimated in terms of $k$ by virtue of Theorem~\ref{thm:gg}. However, the problem of determining the maximal order $k$ such that the identical vanishing of all $I_1,\dots,I_k$ implies that the perturbation \eqref{pert-H} entirely consists of integrable systems, is a transcendentally difficult problem which includes as a particular case the famous Poincar\'e problem of discrimination between center and focus.

In the degenerate cases the higher variations $I_k$ may not be periods of polynomials forms (Abelian integrals). For instance, the monodromy operators may have Jordan cells of size greater than 2 (which is impossible for the ``genuine'' integrals over cycles of dimension 1), see \cite{gavr-ili}. Nevertheless, one can always express the first nonvanishing variation $I_k$ using iterated path integrals \cite{gavr-iter}. These are special functions which in some sense interpolate between (dual spaces to) finite-dimensional homology of the complex fibers $H^{-1}(t)$ and their infinite-dimensional homotopy. L.~Gavrilov in \cite{gavr-iter} proved that the iterated integrals considered as functions of a single variable $t$ satisfy a regular system of linear ordinary differential equations. The monodromy of this system was shown to be quasiunipotent in \cite{gavr-nov}. The explicit derivation of this system, achieved by S.~Benditkis and D.~Novikov, shows that this system is defined over $\Q$.

\begin{Thm}[S.~Benditkis  and D.~Novikov \cite{nov-bend}, submitted]
Iterated integrals of finite order are Q-functions.
\end{Thm}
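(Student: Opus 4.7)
The plan is to verify each of the four defining conditions of a Q-function from Definition~\ref{def:Q} for the Pfaffian system satisfied by iterated integrals. Fix a polynomial Hamiltonian $H\in\Q[x,y]$ and a collection of rational 1-forms $\omega_1,\ldots,\omega_r$ defined over $\Q$; along a continuous family of cycles $c_t\subseteq\{H=t\}$, the iterated integrals of length at most $k$ built from these forms, together with the ordinary periods of the $\omega_i$, assemble into a vector-valued function $X(t)$ defined on $\C P^1\ssm\S$, where $\S$ is the union of the critical locus of $H$ with any finitely many extra points produced by the construction.

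First, I would appeal to Gavrilov's theorem \cite{gavr-iter}, which asserts that $X$ satisfies a regular linear system of ordinary differential equations with rational coefficients. This immediately secures linearity of the monodromy (condition~(1) of Definition~\ref{def:Q}) and the moderate growth of $X$ at $\S$ (condition~(3)); the flatness condition is automatic in the one-variable setting.

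Second, I would invoke the Gavrilov--Novikov result \cite{gavr-nov} asserting quasiunipotence of the monodromy of this system along every small loop around $\S$, which yields condition~(2); combined with the Kashiwara theorem cited earlier, it in fact suffices to examine one small loop per irreducible component of $\S$.

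The heart of the proof, and its only substantive step, is condition~(4): the explicit verification that the matrix form $\Omega = \d X\cdot X^{-1}$ is defined over $\Q$. Here I would follow the strategy of Benditkis and Novikov \cite{nov-bend} and build the Picard--Fuchs system concretely, rather than quoting Grothendieck abstractly. Starting from a $\Q$-basis of the relative de Rham cohomology of the affine curves $\{H=t\}$, I would proceed by induction on the length $k$: differentiating $I_{\omega_{i_1},\ldots,\omega_{i_k}}(t)$ in $t$ and applying the Chen--Leibniz recursion expresses the derivative as a sum of lower-order contributions (iterated integrals of length $k-1$ arising from Gelfand--Leray residues along $\d H$) together with iterated integrals of length $k$ in which a single form has been replaced by a cohomologically equivalent representative. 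The main obstacle is to carry out the ensuing reduction modulo exact forms and the Jacobian ideal $\langle\partial H/\partial x,\partial H/\partial y\rangle$ in an effective manner that keeps all coefficients in $\Q(t)$ with explicitly bounded complexity --- essentially an effective Griffiths--Dwork-type procedure adapted to iterated integrals. Once this reduction is in place, the resulting matrix is manifestly rational over $\Q$, and all four conditions of Definition~\ref{def:Q} have been established.
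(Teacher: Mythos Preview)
Your proposal is correct and follows exactly the outline the paper itself gives: the paper does not prove this theorem in full but attributes it to \cite{nov-bend} and sketches the three ingredients --- Gavrilov's regularity result \cite{gavr-iter}, the Gavrilov--Novikov quasiunipotence theorem \cite{gavr-nov}, and the explicit Benditkis--Novikov derivation over $\Q$ --- which is precisely the decomposition you reproduce. Your additional remarks on the Chen--Leibniz recursion and the effective reduction modulo the Jacobian ideal are a reasonable elaboration of what the paper summarizes in a single sentence.
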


 As in all preceding cases, the proof is constructive and gives an explicit bound for the parameters of the Q-function in terms of all relevant integer data, this time including the order of the iterated integrals. The bound is given by a tower function (iterated exponential) of height 4 and is obviously very excessive.

\subsection{Some open questions}
In a somewhat surprising way, all nontrivial examples of Q-functions originate from periods of rational forms and their generalizations (iterated integrals). It would be very interesting and instructive to have other types of examples, not directly related to periods or derived constructions.

Another question is motivated by Theorem~\ref{thm:gg}. A quasiunipotent representation $\rho$ of the fundamental group $\pi_1(\C P^m\ssm\S,\cdot)$ for an algebraic divisor $\S$ can be realized by a regular flat connection $\Omega$ (integrable Pfaffian system) with the preassigned monodromy $\rho$. The system $\Omega$ is unique modulo a rational gauge transformation (cf.~with p.~\pageref{p:gauge}) and under some conditions (to be determined, depending on the representation and the divisor) is defined over $\Q$, producing thus a family of Q-functions. Theorem~\ref{thm:gg} asserts that two globally gauge equivalent systems admit rather close (in the relative scale, of course) counting functions. The question is whether the counting function can be assigned to the pair $(\rho,\S)$ of the monodromy group and the polar divisor, rather than to a Q-system realizing this group.

Next, we note that the quasiunipotence assumption of Theorem~\ref{thm:main} can be relaxed: in fact, it is sufficient to require that all eigenvalues of the small monodromy operators have unit modulus. However, it may well be that such ``weakly quasiunipotent'' systems are necessarily quasiunipotent in the standard (strong) sense (e.g., by some application of the Gelfond--Schneider theorem).

Finally, the counting problem can be reformulated in geometric terms which admit a number of natural generalizations. A regular flat system on $\C P^m$ with $n$-dimensional fibers defines a (singular) foliation $\mathscr F$ with leaves of dimension $m$ and codimension $n$ which, because of the linearity, induces another foliation of dimension $m$ and codimension $n-1$ on the product $\C P^m\times\C P^{n-1}$. The counting function measures the number of isolated intersections between the leaves of this foliation and the (linear or projective, respectively) subspaces of the product, which have a complementary (co)dimension $n$ and very special position: projection of those subspaces on the base $\C P^m$ is one-dimensional, hence their intersection with the fibers is abnormally large.

One can try to drop this restriction and ask about the maximal number of isolated intersections with \emph{arbitrary} (linear or projective) subspace of the complementary (co)dimension. Analytically this means counting isolated solutions of \emph{systems} of equations of Q-functions. This problem seems to be quite challenging and may find applications in various counting problems. Besides, one can generalize the settings and consider arbitrary (not necessarily linear) foliations $\mathscr F$ defined by polynomial data and count isolated intersections of their leaves with affine subspaces of complimentary dimension.  So far the problem is solved only for foliations of dimension $1$ \cite{annalif-99} and of codimension $1$ \cite{fewnomials} or reducible to the latter case (``Pfaffian chains'').




\begin{thebibliography}{BNY10}
	
\bibitem[BeN11]{nov-bend} S. Benditkis, D. Novikov, \emph{On the number of zeros of Melnikov functions}, 	arXiv:1007.0672v1 [math.DS], submitted (2011).

\bibitem[BY09]{annalif-09}G. Binyamini, S.  Yakovenko, \emph{Polynomial bounds for the oscillation of solutions of Fuchsian
 systems}, Ann.~Inst.~Fourier (Grenoble)  59  (2009),  no.~7, 2891--2926.

\bibitem[BNY10]{invmath-10}G. Binyamini, D. Novikov, S. Yakovenko, \emph{On the number of zeros of Abelian integrals},
 Invent. Math.~181  (2010),  no.~2, 227--289.

\bibitem[BD11]{galgal} G. Binyamini, G. Dor, \emph{Constructive linear bound for the number of roots of Abelian integrals}, in preparation (2011).
		
\bibitem[Fo02]{forsyth}
A. R. Forsyth, \emph{Theory of Differential Equations}, Part III, Ordinary Differential Equations, Cambridge University Press, 1902, \parasymbol 17.


\bibitem[Fr96]{francoise}J. P. Fran\c coise,  \emph{Successive derivatives of a first return map, application to the study
 of quadratic vector fields}, Ergodic Theory Dynam. Systems  16  (1996),  no.~1, 87--96.

\bibitem[G98]{gavrilov} L.~Gavrilov,  \emph{Petrov modules and zeros of Abelian integrals},
 Bull. Sci. Math.  122  (1998),  no.~8, 571--584.

\bibitem[GI05]{gavr-ili}L. Gavrilov, I. D. Iliev, \emph{The displacement map associated to polynomial unfoldings of planar
 Hamiltonian vector fields},  Amer. J. Math.  127  (2005),  no.~6, 1153--1190.

\bibitem[G05]{gavr-iter}  L. Gavrilov, \emph{Higher order Poincar\'e-Pontryagin functions and iterated path
 integrals},  Ann. Fac. Sci. Toulouse Math. (6)  14  (2005),  no. 4, 663--682.
 
\bibitem[GN10]{gavr-nov} L. Gavrilov, D.  Novikov, \emph{On the finite cyclicity of open period annuli},
 Duke Math. J.  152  (2010),  no. 1, 1--26.

 
\bibitem[Gr66]{groth} A. Grothendieck, \emph{On the de Rham cohomology of algebraic varieties}, Inst. Hautes \'Etudes Sci. Publ. Math., no. 29, 1966, 95--103.

\bibitem[IY08]{thebook} Yu.  Ilyashenko, S. Yakovenko,  \emph{Lectures on analytic differential equations},
Graduate Studies in Mathematics, 86. American Mathematical Society, Providence, RI,  2008. xiv+625 pp.

\bibitem[Ka81]{kashiwara} M. Kashiwara, \emph{Quasi-unipotent constructible sheaves},
 J. Fac. Sci. Univ. Tokyo Sect. IA Math.  28  (1981),  no. 3, 757--773 (1982).

\bibitem[K91]{fewnomials} A. G. Khovanskii, \emph{Fewnomials}, Translations of Mathematical Monographs, 88. American Mathematical Society, Providence, RI,  1991. viii+139 pp.

\bibitem[KY96]{jdcs-96}
A. Khovanskii, S.  Yakovenko,  \emph{Generalized Rolle theorem in ${\mathbb R}^n$ and ${\mathbb C}$},
 J. Dynam. Control Systems  2  (1996),  no. 1, 103--123.

\bibitem[NY99]{annalif-99} D. Novikov, S.  Yakovenko, \emph{Trajectories of polynomial vector fields and ascending chains of
 polynomial ideals},  Ann. Inst. Fourier (Grenoble)  49  (1999),  no. 2, 563--609.


\bibitem[P97]{petrov} G. S. Petrov, \emph{On the nonoscillation of elliptic integrals},
(Russian)  Funktsional. Anal. i Prilozhen.  31  (1997),  no.~4, 47--51, 95; translation in  Funct. Anal. Appl.  31  (1997),  no. 4, 262--265 (1998).

\bibitem[Y95]{bautin} S. Yakovenko,  \emph{A geometric proof of the Bautin theorem}.
 Concerning the Hilbert 16th problem,  203--219, Amer. Math. Soc. Transl. Ser. 2, 165, Amer. Math. Soc., Providence, RI,  1995.

\bibitem[Y05]{montr} S. Yakovenko, \emph{Quantitative Theory of Ordinary Differential Equations and the Tangential Hilbert 16th Problem},
in: On Finiteness in Differential Equations and Diophantine Geometry, (D. Schlomiuk,~Ed.), CRM Monograph Series, Vol.~24, AMS, Providence~RI, 2005, 41--110.

\bibitem[Z06]{zoladek}H. \.Zo\polishl \c adek, \emph{The monodromy group},
Instytut Matematyczny Polskiej Akademii Nauk. Monografie Matematyczne
 (New Series), 67. Birkhäuser Verlag, Basel,  2006.

		

%
%
%
%
%
%

\end{thebibliography}
\end{document}